\renewcommand{\@seccntformat}[1]{{\csname the#1\endcsname}.\hspace{.5em}}
\newtheorem{theorem}{Theorem}[section]
\newtheorem{corollary}[theorem]{Corollary}
\newtheorem{lemma}[theorem]{Lemma}
\renewcommand{\thefootnote}{*}
\numberwithin{equation}{section}
\begin{document}

\begin{center}
{\large\bf  Some $q$-supercongruences for multiple basic\\[1mm]
hypergeometric series}
\end{center}

\vskip 2mm \centerline{Chuanan Wei}
\begin{center}
{\footnotesize School of Biomedical Information and Engineering,\\
  Hainan Medical University, Haikou 571199, China\\
{\tt Email address:weichuanan78@163.com} }
\end{center}


\vskip 0.7cm \noindent{\bf Abstract.} In terms of  several summation
and transformation formulas for basic hypergeometric series, two
forms of the Chinese remainder theorem for coprime polynomials, the
creative microscoping method introduced by Guo and Zudilin, Guo and
Li's lemma, and El Bachraoui's lemma, we establish some
$q$-supercongruences for multiple basic hypergeometric series modulo
the fifth and sixth powers of a cyclotomic polynomial. In detail, we
generalize  Guo and Li's two $q$-supercongruences for double basic
hypergeometric series, which are related to $q$-analogues of Van
Hamme's (C.2) supercongruence and Long's supercongruence,
respectively. In addition, we also present two conclusions for
double and triple hypergeometric series associated with Van Hamme's
(D.2) supercongruence.

\vskip 3mm \noindent {\it Keywords}: basic hypergeometric series;
Chinese remainder theorem for coprime polynomials; creative
microscoping method; $q$-supercongruences

\vskip 0.2cm \noindent{\it AMS Subject Classifications}: 11A07, 11B65

\renewcommand{\thefootnote}{**}

\section{Introduction}
For a complex number $x$ and a nonnegative integer $n$, define the
shifted-factorial to be
\[(x)_{n}=\Gamma(x+n)/\Gamma(x),\]
where $\Gamma(x)$ denotes the usual Gamma function.  All over the
paper, $p$ is an odd prime and $\Gamma_p(x)$ stands for the $p$-adic
Gamma function. In 1997, Van Hamme \cite[(C.2) and (D.2)]{Hamme}
proposed the following nice conjectures:
\begin{equation}\label{van-hamme-a}
\sum_{k=0}^{(p-1)/2}(4k+1)\frac{(\frac{1}{2})_k^4}{k!^4}\equiv
 p \pmod{p^3},
\end{equation}
 and for $p\equiv 1\pmod 6$,
\begin{equation}\label{van-hamme-b}
\sum_{k=0}^{(p-1)/3}(6k+1)\frac{(\frac{1}{3})_k^6}{k!^6}\equiv
 -p\Gamma_p(\tfrac{1}{3})^9 \pmod{p^4}.
\end{equation}
Some years later, Long and Ramakrishna \cite{LR-b} certified that
\eqref{van-hamme-b} is right modulo $p^6$.
  In 2011, Long \cite{LR-a} obtained the following beautiful supercongruence:
for $p>3$,
\begin{align}
\sum_{k=0}^{(p-1)/2}(4k+1)\frac{(\frac{1}{2})_k^6}{k!^6} &\equiv
p\sum_{k=0}^{(p-1)/2}\frac{(\frac{1}{2})_k^4}{k!^4}\pmod{p^4}.
\label{long}
\end{align}
  More supercongruences can be seen in the papers
\cite{GLS,He,Liu-b,Liu-c}.

For two complex numbers $x$ and $q$, define the $q$-shifted
factorial as
 \begin{equation*}
(x;q)_{0}=1\quad\text{and}\quad
(x;q)_n=(1-x)(1-xq)\cdots(1-xq^{n-1})\quad \text{when}\quad
n\in\mathbb{Z}^{+}.
 \end{equation*}
For simplicity, we frequently adopt the notation
\begin{equation*}
(x_1,x_2,\dots,x_m;q)_{n}=(x_1;q)_{n}(x_2;q)_{n}\cdots(x_m;q)_{n},
 \end{equation*}
where $m\in\mathbb{Z}^{+}$ and $n\in\mathbb{Z}^{+}\cup\{0\}.$
 Throughout the paper, let $[r]$ denote the $q$-integer $(1-q^r)/(1-q)$ and let $\Phi_n(q)$ stand for
the $n$-th cyclotomic polynomial in $q$:
\begin{equation*}
\Phi_n(q)=\prod_{\substack{1\leqslant k\leqslant n\\
\gcd(k,n)=1}}(q-\zeta^k),
\end{equation*}
where $\zeta$ is an $n$-th primitive root of unity. Recently, Guo
and Wang \cite{GW} established a $q$-analogue of
\eqref{van-hamme-a}: for any positive odd integer $n$,
\begin{equation*}
\sum_{k=0}^{(n-1)/2}[4k+1]\frac{(q;q^2)_k^4}{(q^2;q^2)_k^4} \equiv
[n]q^{(1-n)/2}+[n]^3q^{(1-n)/2}\frac{(n^2-1)(1-q)^2}{24}\pmod{[n]\Phi_n(q)^3}.
\end{equation*}
The author \cite{Wei-a} deduced a $q$-analogue of
\eqref{van-hamme-b}: for any positive integer $n\equiv 1\pmod 3$,
\begin{align*}
\sum_{k=0}^{(n-1)/3}[6k+1]\frac{(q;q^3)_k^6}{(q^3;q^3)_k^6}q^{3k}
&\equiv[n]
\frac{(q^2;q^3)_{(n-1)/3}^3}{(q^3;q^3)_{(n-1)/3}^3}
\\[1mm]
&\hspace{-30mm}\times\:
\bigg\{1+[n]^2(2-q^{n})\sum_{j=1}^{(n-1)/3}\bigg(\frac{q^{3j-1}}{[3j-1]^2}-\frac{q^{3j}}{[3j]^2}\bigg)\bigg\}
\pmod{[n]\Phi_n(q)^4},
\end{align*}
and the author \cite{Wei} found a $q$-analogue of \eqref{long}: for
any positive odd integer $n$,
\begin{align*}
\sum_{k=0}^{(n-1)/2}[4k+1]\frac{(q;q^2)_k^6}{(q^2;q^2)_k^6}q^k
&\equiv[n]q^{(1-n)/2}\bigg\{1+[n]^2\frac{(n^2-1)(1-q)^2}{24}\bigg\}\\[5pt]
&\quad\times\sum_{k=0}^{(n-1)/2}\frac{(q;q^2)_k^4}{(q^2;q^2)_k^4}q^{2k}\pmod{[n]\Phi_n(q)^3}.
\end{align*}

In 2021, El Bachraoui \cite{Mohamed}  proved several
$q$-supercongruences for double basic hypergeometric series.
 Motivated by El Bachraoui's work, Guo and Li \cite[Theorems 1.4 and 5.4]{GuoLi}
gave the following two $q$-supercongruences for double basic
hypergeometric series related to \eqref{van-hamme-a} and
\eqref{long}: for any positive odd integer $n$,
\begin{equation}
\sum_{i+j\leq n-1}c_q(i)c_q(j)\equiv [n]^2q^{1-n}
\pmod{[n]\Phi_n(q)^3}, \label{eq:guo-li-a}
\end{equation}
where
$$
 c_q(k)=[4k+1]\frac{(q;q^2)_k^4}{(q^2;q^2)_k^4}
\quad\text{with}\quad k\geq 0,
$$
and for any positive odd integer $n$,
\begin{equation}
\sum_{i+j\leq n-1}c_q(i)c_q(j)\equiv
[n]^2q^{1-n}\bigg\{\sum_{k=0}^{(n-1)/2}\frac{(q;q^2)_k^4}{(q^2;q^2)_k^4}q^{2k}\bigg\}^2
\pmod{[n]\Phi_n(q)^2}, \label{eq:guo-li-b}
\end{equation}
where
 $$
  c_q(k)=[4k+1]\frac{(q;q^2)_k^6}{(q^2;q^2)_k^6}q^k\quad\text{with}\quad k\geq
  0.
 $$
Song and Wang \cite[Corollary 2.3]{SW} hoped to furnish a
generalization of \eqref{eq:guo-li-a} modulo $[n]\Phi_n(q)^4$.
However, their derivation from Theorem 2.2 to Corollary 2.3 by
taking $d=q$ is wrong since that the factor $(1-q^n)$ appears in the
denominator of the series on the left-hand side. A generalization of
\eqref{eq:guo-li-b} due to Song and Wang \cite[Corollary 2.5]{SW}
reads
\begin{align}
\sum_{i+j\leq n-1}c_q(i)c_q(j)&\equiv
[n]^2q^{1-n}\sum_{k=0}^{(n-1)/2}\frac{(q;q^2)_k^4}{(q^2;q^2)_k^4}q^{2k}
\notag\\[1mm]
&\quad\times\sum_{k=0}^{(n-1)/2}\frac{(q;q^2)_k^4}{(q^2;q^2)_k^4}q^{2k}
\bigg\{1-2[n]^2\sum_{t=1}^{2k}\frac{q^t}{[t]^2}\bigg\}
\pmod{[n]\Phi_n(q)^4}. \label{eq:Song-Wang}
\end{align}
For more results on $q$-supercongruences, the reader is referred to
the papers \cite{GuoZu-c,Li,Wei-c,XW}. Encouraged by these works, we
shall establish the generalizations of \eqref{eq:guo-li-a} and
\eqref{eq:Song-Wang} in the following two theorems.

\begin{theorem}\label{thm-a}
Let $n$ be a positive odd integer and
$$\theta_q(k)=[4k+1]\frac{(q;q^2)_k^4}{(q^2;q^2)_k^4}\quad\text{with}\quad k\geq
  0.$$
 Then, modulo $[n]\Phi_n(q)^4$,
\begin{align}
\sum_{i+j\leq n-1}\theta_q(i)\theta_q(j)&\equiv
[n]^2q^{1-n}-2[n]^4\sum_{t=1}^{(n-1)/2}\frac{q^{2t+1}}{[2t]^2}.
\label{eq:wei-a}
\end{align}
\end{theorem}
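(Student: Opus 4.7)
The plan is to invoke the Chinese remainder theorem for coprime polynomials to split the modulus as $[n]\Phi_n(q)^4=\Phi_n(q)^5\prod_{d\mid n,\,1<d<n}\Phi_d(q)$. Thus \eqref{eq:wei-a} reduces to its reductions modulo $\Phi_n(q)^5$ and modulo each $\Phi_d(q)$ with $1<d<n$. The minor cyclotomic congruences should follow, as in Guo and Li's proof of \eqref{eq:guo-li-a}, from partitioning the summation range into blocks of length $d$ and noting that both $[n]^{2}q^{1-n}$ and $[n]^{4}\sum_t q^{2t+1}/[2t]^2$ are visibly divisible by $\Phi_d(q)$.

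For the modulo $\Phi_n(q)^5$ part, the key observation is that $\theta_q(k)\equiv 0\pmod{\Phi_n(q)^4}$ whenever $(n+1)/2\leq k\leq n-1$: indeed $(q;q^2)_k^4$ then carries $(1-q^n)^4$ while $(q^2;q^2)_k^4$ is coprime to $\Phi_n(q)$ (since $n$ is odd). Setting $A=\sum_{k=0}^{(n-1)/2}\theta_q(k)$ and partitioning the double sum according to whether each of $i,j$ exceeds $(n-1)/2$ yields
\[
\sum_{i+j\leq n-1}\theta_q(i)\theta_q(j)=A^{2}+R,\qquad R:=2\sum_{i=(n+1)/2}^{n-1}\theta_q(i)\sum_{j=0}^{n-1-i}\theta_q(j).
\]
From Guo and Wang's $q$-analogue of \eqref{van-hamme-a} one has $A\equiv [n]q^{(1-n)/2}\bigl(1+[n]^{2}(n^2-1)(1-q)^{2}/24\bigr)\pmod{\Phi_n(q)^4}$; squaring (and using the standard fact that $A^{2}\pmod{\Phi_n(q)^5}$ is determined by $A\pmod{\Phi_n(q)^4}$, since $A$ is itself divisible by $\Phi_n(q)$) gives $A^{2}\equiv [n]^{2}q^{1-n}+[n]^{4}q^{1-n}(n^2-1)(1-q)^{2}/12\pmod{\Phi_n(q)^5}$. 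The classical cyclotomic identity $\sum_{t=1}^{(n-1)/2}\zeta^{2t}/(1-\zeta^{2t})^{2}=-(n^2-1)/24$ at primitive $n$-th roots of unity $\zeta$, together with $q^{n}\equiv 1\pmod{\Phi_n(q)}$, then shows that the right-hand side of \eqref{eq:wei-a} is congruent to the same expression modulo $\Phi_n(q)^5$. Consequently the theorem reduces modulo $\Phi_n(q)^5$ to the vanishing $R\equiv 0\pmod{\Phi_n(q)^5}$.

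Since $R$ is a priori divisible by $\Phi_n(q)^{4}$ only, this claim amounts to the vanishing of the single residue $R/\Phi_n(q)^{4}$ at $q\equiv\zeta$. I would establish it by applying the reversal substitution $i\mapsto n-1-i$, using a standard $q$-reversal to rewrite $\theta_q(n-1-i)/\Phi_n(q)^4$ at $q\equiv\zeta$ in terms of $\theta_q(i)$, then invoking El Bachraoui's lemma to decouple the inner partial sum from the outer summation and Guo and Li's lemma to close the combinatorial identity. For instance when $n=5$ this pipeline reduces the check to $(1-\zeta^{3})^{5}+(1-\zeta^{2})^{5}=0$, which holds since $(1-\zeta^{3})/(1-\zeta^{2})=e^{i\pi/5}$.

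The principal obstacle will be this vanishing $R\equiv 0\pmod{\Phi_n(q)^5}$, i.e.\ pushing the boundary-term analysis one cyclotomic power beyond Guo and Li's proof of \eqref{eq:guo-li-a}. It requires a unified application of El Bachraoui's and Guo and Li's lemmas along with a careful $q$-reversal identification of $\theta_q(n-1-i)/\Phi_n(q)^4$ at primitive $n$-th roots of unity.
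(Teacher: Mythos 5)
Your reduction is set up correctly: the splitting $[n]\Phi_n(q)^4=\Phi_n(q)^5\prod_{d\mid n,\,1<d<n}\Phi_d(q)$, the identity $\sum_{i+j\le n-1}\theta_q(i)\theta_q(j)=A^2+R$ with $A=\sum_{k=0}^{(n-1)/2}\theta_q(k)$, the observation that $\theta_q(k)\equiv0\pmod{\Phi_n(q)^4}$ for $(n+1)/2\le k\le n-1$, and the verification (via Guo--Wang and the identity $\sum_{t=1}^{(n-1)/2}\zeta^{2t}/(1-\zeta^{2t})^2=-(n^2-1)/24$) that $A^2$ matches the right-hand side of \eqref{eq:wei-a} modulo $\Phi_n(q)^5$ are all sound. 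But the proof then stands or falls on the single claim $R\equiv0\pmod{\Phi_n(q)^5}$, and this is exactly the step you do not prove. What is needed is the vanishing at $q=\zeta$ of $\sum_{s=1}^{(n-1)/2}\bigl[\theta_q(\frac{n-1}{2}+s)/(1-q^n)^4\bigr]\sum_{j=0}^{(n-1)/2-s}\theta_q(j)$, which requires (i) an explicit evaluation of the ratios $\theta_\zeta(\frac{n-1}{2}+s)/(1-q^n)^4$ and (ii) control of the \emph{partial} sums $\sum_{j=0}^{(n-1)/2-s}\theta_\zeta(j)$, which, unlike the full half-sum $A(\zeta)=0$, do not vanish. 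Neither of the lemmas you invoke does this job as stated: Guo and Li's lemma (Lemma \ref{lemma-a}) only yields first-power vanishing of the \emph{complete} double sum under an antisymmetry hypothesis, and El Bachraoui's lemma (Lemma \ref{lemma-b}) is a multiplicativity/truncation statement used for the minor cyclotomic factors; there is no mechanism in either for a fifth-power vanishing of a boundary term. Your $n=5$ computation reducing to $(1-\zeta^3)^5+(1-\zeta^2)^5=0$ is correct, but it is a single-case check, not a template, and the general identity it would have to generalize to is left entirely open.

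For contrast, the paper avoids this boundary analysis altogether by the creative microscoping method: it proves a parametric generalization (Theorem \ref{thm-d}) for $\lambda_q(k)$ containing a free parameter $a$, obtaining a congruence modulo $[n]\Phi_n(q)^2(1-aq^n)(a-q^n)$. The two extra powers of $\Phi_n(q)$ you are missing come for free there, because $(1-aq^n)(a-q^n)$ degenerates to $(1-q^n)^2$ as $a\to1$. The ingredients are: Watson's $_8\phi_7$ transformation to evaluate the sum at $a=q^{\pm n}$ and $b=q^{\pm n}$, the Chinese remainder theorem for the coprime polynomials $[n]$, $(1-aq^n)(a-q^n)$, $(1-bq^n)(b-q^n)$, the $q$-Saalsch\"utz identity to simplify, the specialization $b=1$, and finally L'H\^opital's rule as $a\to1$, which is precisely what produces the term $-2[n]^4\sum_{t}q^{2t+1}/[2t]^2$. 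If you want to salvage your direct route, you must supply a complete proof of $R\equiv0\pmod{\Phi_n(q)^5}$; as written, the argument has a genuine gap at its central step.
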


Choosing $n=p^s$ and taking $q\to 1$ in Theorem \ref{thm-a}, we get
the supercongruence.

\begin{corollary}\label{cor-a}
Let $p>3$ be an odd prime and $s$ a positive integer.  Then for
$i,j\geq0$,
\begin{align*}
&\sum_{i+j\leq
p^s-1}(4i+1)(4j+1)\frac{(\frac{1}{2})_i^4(\frac{1}{2})_j^4}{(1)_i^4(1)_j^4}
\equiv p^{2s}-\frac{p^{4s}}{2}H_{(p^s-1)/2}^{(2)}\pmod{p^{s+4}},
\end{align*}
where the harmonic numbers of $2$-order have been defined by
\[H_{m}^{(2)}
  =\sum_{i=1}^m\frac{1}{i^{2}}.\]
\end{corollary}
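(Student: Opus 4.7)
The plan is to derive Corollary~\ref{cor-a} as a direct $q\to 1$ specialization of Theorem~\ref{thm-a} at $n=p^s$: there is no new ingredient, only a matching of each $q$-piece with its classical limit.

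For the modulus, I would invoke the standard identifications $[p^s]\big|_{q=1}=p^s$ and $\Phi_{p^s}(1)=p$ (valid for every $s\ge 1$ since $p$ is prime), so that $[p^s]\Phi_{p^s}(q)^4$ specializes to $p^s\cdot p^4=p^{s+4}$, which is exactly the modulus in the corollary. The usual transfer principle then converts the $q$-congruence of Theorem~\ref{thm-a} into a numerical congruence in $\mathbb{Z}_{(p)}$ modulo $p^{s+4}$.

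For the summands, the two key limits are $[4k+1]\big|_{q=1}=4k+1$ and
\[
\left.\frac{(q;q^2)_k}{(q^2;q^2)_k}\right|_{q=1}=\prod_{j=1}^{k}\frac{2j-1}{2j}=\frac{(1/2)_k}{(1)_k},
\]
so $\theta_q(k)\to(4k+1)(1/2)_k^4/(1)_k^4$, reproducing the left-hand side of the corollary. On the right, $[p^s]^2 q^{1-p^s}\to p^{2s}$, $[p^s]^4\to p^{4s}$, and
\[
\sum_{t=1}^{(p^s-1)/2}\frac{q^{2t+1}}{[2t]^2}\;\longrightarrow\;\frac{1}{4}\sum_{t=1}^{(p^s-1)/2}\frac{1}{t^2}=\tfrac{1}{4}\,H_{(p^s-1)/2}^{(2)}.
\]
Assembling the pieces yields $p^{2s}-2p^{4s}\cdot\tfrac{1}{4}H_{(p^s-1)/2}^{(2)}=p^{2s}-\tfrac{p^{4s}}{2}H_{(p^s-1)/2}^{(2)}$, which is the claim.

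There is no substantive obstacle: once Theorem~\ref{thm-a} is granted the argument is a routine specialization. The only point meriting a brief check is the legitimacy of evaluating the $q$-congruence at $q=1$; the rational denominators appearing on the right of Theorem~\ref{thm-a} are products of $\Phi_d(q)$ with $d\le p^s-1$, whose values at $q=1$ are either $1$ or a prime $\ne p$, hence units in $\mathbb{Z}_{(p)}$, so the specialization is clean.
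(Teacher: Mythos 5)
Your derivation is exactly the paper's: the paper proves Corollary \ref{cor-a} with the single sentence ``Choosing $n=p^s$ and taking $q\to 1$ in Theorem \ref{thm-a}, we get the supercongruence,'' and your identifications of the modulus ($[p^s]\Phi_{p^s}(q)^4\mapsto p^{s+4}$) and of each factor's limit are all correct. The only caveat is that your closing remark --- that every denominator $\Phi_d(1)$ is prime to $p$ --- is literally true only for $s=1$: for $s\ge 2$ the factors $[2t]$ with $p\mid t$ contain $\Phi_{p^j}(q)$ with $\Phi_{p^j}(1)=p$, so one should add that those terms are multiplied by $[n]^4\to p^{4s}$ and hence still contribute multiples of $p^{4s-2(s-1)}=p^{2s+2}\ge p^{s+4}$; the paper passes over this point in silence as well.
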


\begin{theorem}\label{thm-b}
Let $n$ be a positive odd integer and
$$\theta_q(k)=[4k+1]\frac{(q;q^2)_k^6}{(q^2;q^2)_k^6}q^k\quad\text{with}\quad k\geq
  0.$$ Then,
modulo $[n]\Phi_n(q)^5$,
\begin{align}
\sum_{i+j\leq n-1}\theta_q(i)\theta_q(j)&\equiv
[n]^2q^{1-n}\sum_{k=0}^{(n-1)/2}\frac{(q;q^2)_k^4}{(q^2;q^2)_k^4}q^{2k}
\notag\\[1mm]
&\quad\times\sum_{k=0}^{(n-1)/2}\frac{(q;q^2)_k^4}{(q^2;q^2)_k^4}q^{2k}
\bigg\{1-2(2-q^n)[n]^2\sum_{t=1}^{2k}\frac{q^t}{[t]^2}\bigg\}.
\label{eq:wei-b}
\end{align}
\end{theorem}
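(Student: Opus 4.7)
The plan is to combine the Chinese remainder theorem for coprime polynomials with Guo--Zudilin's creative microscoping method. Since $\gcd(\Phi_n(q),[n]/\Phi_n(q))=1$ in $\mathbb Z[q]$, it suffices to verify \eqref{eq:wei-b} separately modulo $\Phi_n(q)^5$ and modulo $[n]/\Phi_n(q)$, and then assemble the two pieces. The modulus $\Phi_n(q)^5$ carries the analytic weight of the proof, while the companion modulus $[n]/\Phi_n(q)$ should follow routinely from El Bachraoui's lemma once the parametric identity is secured.

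For the $\Phi_n(q)^5$-part I would introduce an auxiliary parameter $a$ and form the perturbed summand
$$
\theta_q(k;a)=[4k+1]\frac{(aq,q/a;q^2)_k(q;q^2)_k^4}{(aq^2,q^2/a;q^2)_k(q^2;q^2)_k^4}q^k,
$$
which reduces to $\theta_q(k)$ at $a=1$. Applying a Watson-type ${}_{8}\phi_{7}$ transformation to the one-variable partial sum $\sum_{k=0}^{(n-1)/2}\theta_q(k;a)$ and simplifying via the $q$-Pfaff--Saalsch\"utz identity, I would obtain a closed-form congruence
$$
\sum_{k=0}^{(n-1)/2}\theta_q(k;a)\equiv [n]q^{(1-n)/2}\sum_{k=0}^{(n-1)/2}\frac{(aq,q/a;q^2)_k(q;q^2)_k^2}{(aq^2,q^2/a;q^2)_k(q^2;q^2)_k^2}q^{2k}\pmod{\Phi_n(q)(1-aq^n)(a-q^n)}.
$$
Multiplying two copies of this congruence in the running indices $i$ and $j$ and collapsing the triangle $i+j\le n-1$ by Guo--Li's summation lemma then yields an $a$-parametric congruence for $\sum_{i+j\le n-1}\theta_q(i;a)\theta_q(j;a)$ modulo $\Phi_n(q)(1-aq^n)^2(a-q^n)^2$.

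Setting $a=1$ and noting that $(1-aq^n)|_{a=1}=(a-q^n)|_{a=1}=1-q^n$ is divisible by $\Phi_n(q)$, each of the four microscoping factors contributes a copy of $\Phi_n(q)$, so the modulus is upgraded to $\Phi_n(q)^5$. The correction factor $\{1-2(2-q^n)[n]^2\sum_{t=1}^{2k}q^t/[t]^2\}$ will emerge from the second-order Taylor expansion about $a=1$ of $(aq,q/a;q^2)_k/(aq^2,q^2/a;q^2)_k$: its first derivative vanishes by the $a\mapsto a^{-1}$ symmetry while its second derivative is exactly the $q$-harmonic sum $\sum_{t=1}^{2k}q^t/[t]^2$, with the factor $(2-q^n)$ arising from the $\Phi_n(q)^2$-residue of $(1-aq^n)(a-q^n)$ at $a=1$, and the leading coefficient $2$ from having two symmetric factors in the double sum. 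For the complementary modulus $[n]/\Phi_n(q)$, I would invoke El Bachraoui's lemma: for any divisor $m$ of $n$ with $1<m<n$, at a primitive $m$-th root of unity $\zeta$ the factor $(q;q^2)_k$ forces $\theta_q(k)\equiv0\pmod{\Phi_m(q)}$ whenever $k\ge (m+1)/2$, so both sides truncate to sums over $i+j\le m-1$ and reduce to \eqref{eq:guo-li-b} modulo $\Phi_m(q)$, since $[n]^2\equiv 0\pmod{\Phi_m(q)^2}$ annihilates the higher-order correction at $q=\zeta$.

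The main obstacle, I expect, is the closed-form step of paragraph two: because $\theta_q(k;a)$ still retains four unperturbed copies of $(q;q^2)_k/(q^2;q^2)_k$, the sum lies just outside the scope of the classical very-well-poised ${}_{6}\phi_{5}$ summation, so establishing the required congruence modulo $\Phi_n(q)(1-aq^n)(a-q^n)$ will likely force combining a ${}_{10}\phi_{9}$ transformation with a delicate cancellation of apparent poles in the $a\to 1$ limit. Tracking these cancellations up to second order in $(a-1)$, so that the precise coefficient $2(2-q^n)$ in the correction term can be read off rather than merely bounded, will be the most fragile ingredient of the argument.
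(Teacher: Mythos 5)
Your overall architecture (creative microscoping in a parameter $a$, a Watson-type evaluation at $a=q^{\pm n}$, El Bachraoui's lemma at roots of unity, and a final $a\to1$ limit producing the harmonic-sum correction) is the right family of ideas, but the step that is supposed to deliver the fifth power of $\Phi_n(q)$ fails. From a congruence $\sum_{k}\theta_q(k;a)\equiv T(a)\pmod{\Phi_n(q)(1-aq^n)(a-q^n)}$ you may conclude that the square of the left-hand side is congruent to $T(a)^2$ modulo the \emph{same} modulus, not modulo $\Phi_n(q)(1-aq^n)^2(a-q^n)^2$: writing $S(a)$ for the partial sum, the difference $S^2-T^2=(S-T)(S+T)$ has only its first factor vanishing at $a=q^{\pm n}$, so the polynomial $\sum_{i+j\le n-1}\theta_q(i;a)\theta_q(j;a)-T(a)^2$ vanishes at $a=q^{\pm n}$ to first order only, and no symmetry forces its derivative to vanish there. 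Hence the single-parameter argument yields at best the modulus $[n](1-aq^n)(a-q^n)$, i.e.\ $[n]\Phi_n(q)^2$ after $a\to1$ --- essentially the level of \eqref{eq:guo-li-b} --- and cannot reach $[n]\Phi_n(q)^5$. A second, smaller slip: since $[n]$ already contains one copy of $\Phi_n(q)$, the coprime decomposition of $[n]\Phi_n(q)^5$ is $\Phi_n(q)^6$ times $[n]/\Phi_n(q)$, so verifying the claim modulo $\Phi_n(q)^5$ and modulo $[n]/\Phi_n(q)$ only establishes it modulo $[n]\Phi_n(q)^4$.

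The paper supplies exactly the missing ingredient by perturbing the summand with \emph{three} independent parameters,
$\beta_q(k)=[4k+1]\frac{(aq,q/a,bq,q/b,q/c,q;q^2)_k}{(q^2/a,aq^2,q^2/b,bq^2,cq^2,q^2;q^2)_k}(cq)^k$,
deriving from Watson's $_8\phi_7$ transformation three separate evaluations of the double sum modulo $(1-aq^n)(a-q^n)$, $(1-bq^n)(b-q^n)$ and $(c-q^n)$, establishing the $[n]$-divisibility via Ni--Wang's lemma combined with El Bachraoui's lemma at roots of unity, and gluing all four congruences with the Chinese remainder theorem for coprime polynomials (Lemma \ref{lemma-d}). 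Only then does it specialize $c=1$, $b=1$ and $a\to1$, and these three specializations contribute $\Phi_n(q)$, $\Phi_n(q)^2$ and $\Phi_n(q)^2$ respectively, which is how the modulus $[n]\Phi_n(q)^5$ is assembled; a Sears $_4\phi_3$ transformation is also needed to bring the $a$-dependent right-hand side into the form whose $a\to1$ limit (computed by L'H\^opital, as you anticipate) produces the factor $\{1-2(2-q^n)[n]^2\sum_{t=1}^{2k}q^t/[t]^2\}$. To rescue your outline you must introduce the auxiliary parameters $b$ and $c$ and the three-modulus CRT step; no refinement of the $a\to1$ Taylor expansion alone can recover the missing powers of $\Phi_n(q)$.
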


Fixing $n=p^s$ and taking $q\to 1$ in Theorem \ref{thm-b}, we have
the result.

\begin{corollary}\label{cor-b}
Let $p$ be an odd prime and $s$ a positive integer.  Then for
$i,j\geq0$,
\begin{align*}
&\sum_{i+j\leq
p^s-1}(4i+1)(4j+1)\frac{(\frac{1}{2})_i^6(\frac{1}{2})_j^6}{(1)_i^6(1)_j^6}
\\[1mm]
&\quad\equiv
p^{2s}\sum_{k=0}^{(p^s-1)/2}\frac{(\frac{1}{2})_k^4}{(1)_k^4}\sum_{k=0}^{(p^s-1)/2}\frac{(\frac{1}{2})_k^4}{(1)_k^4}
\bigg\{1-2p^{2s}H_{2k}^{(2)}\bigg\}\pmod{p^{s+5}}.
\end{align*}
\end{corollary}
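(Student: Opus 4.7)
The plan is to specialize Theorem \ref{thm-b} with $n=p^s$ and pass to the limit $q\to 1$. First I would verify termwise that both sides of \eqref{eq:wei-b} have the correct limits. Using $\lim_{q\to 1}[m]=m$ together with $(q;q^2)_k/(q^2;q^2)_k\to (\tfrac12)_k/(1)_k$, the summand $\theta_q(k)$ reduces to $(4k+1)(\tfrac12)_k^6/(1)_k^6$, so the left-hand side of \eqref{eq:wei-b} tends to the left-hand side of the corollary. Likewise, $[n]^2q^{1-n}\to p^{2s}$, each factor $\sum_{k=0}^{(n-1)/2}(q;q^2)_k^4/(q^2;q^2)_k^4\cdot q^{2k}$ tends to $\sum_{k=0}^{(p^s-1)/2}(\tfrac12)_k^4/(1)_k^4$, and the bracketed factor converges to $1-2p^{2s}H_{2k}^{(2)}$ because $2-q^n\to 1$ and $[n]^2 q^t/[t]^2 \to p^{2s}/t^2$ for each $t\leq 2k$.

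For the modulus, I would appeal to the standard identities $\Phi_{p^s}(1)=p$ and $[p^s]\big|_{q=1}=p^s$, which give $\bigl([n]\Phi_n(q)^5\bigr)\big|_{q=1}=p^{s+5}$, matching the modulus claimed in the corollary. Together with the termwise limits above, this would formally yield the desired congruence.

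\emph{Main obstacle.} The only genuinely subtle point is that the harmonic-type terms $\sum_{t=1}^{2k} q^t/[t]^2$ carry denominators that do not remain coprime to $p$ at $q=1$; one must verify that the $\Phi_n(q)^5$-adic congruence of Theorem \ref{thm-b} legitimately descends to a congruence modulo $p^{s+5}$ in $\mathbb{Z}_{(p)}$. I would handle this by viewing both sides as rational functions whose denominators are coprime to $\Phi_{p^s}(q)$ and then using the bound $v_p(t)\leq s-1$ for $1\leq t\leq p^s-1$, which forces $v_p(p^{2s}/t^2)\geq 2$; hence $p^{2s}H_{2k}^{(2)}$ is a $p$-adic integer for every $k\leq (p^s-1)/2$. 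With the denominators absorbed in this way, setting $q=1$ in Theorem \ref{thm-b} produces exactly the asserted congruence, and the remaining work is purely mechanical bookkeeping of the above limits.
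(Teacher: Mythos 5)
Your proposal is correct and is exactly the paper's route: the paper derives Corollary \ref{cor-b} from Theorem \ref{thm-b} precisely by setting $n=p^s$ and letting $q\to 1$, with the termwise limits and the evaluation $\bigl([n]\Phi_n(q)^5\bigr)\big|_{q=1}=p^{s+5}$ that you describe. Your extra care about the $p$-adic integrality of $p^{2s}H_{2k}^{(2)}$ (via $v_p(t)\leq s-1$ for $1\leq t\leq p^s-1$) is a legitimate refinement of a step the paper leaves implicit, but it does not change the argument.
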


Song and Wang \cite{SW} conjectured that \eqref{eq:wei-b} is true
modulo $[n]^2\Phi_n(q)^4$. We verify partially their conjecture in
Theorem \ref{thm-b}. Furthermore, we shall also establish the
following $q$-supercongruence for multiple basic hypergeometric
series.

\begin{theorem}\label{thm-c}
Let $d,m,n,r$ be positive integers subject to $n\equiv r\pmod{d}$,
$(r,d)=1$, and $d\geq m\geq2$. Let $\theta_q(k)$ represent
$$\theta_q(k)=[2dk+r]\frac{(q^r;q^d)_k^6}{(q^d;q^d)_k^6}q^{(2d-3r)k}\quad\text{with}\quad k\geq
  0.$$
 Then, modulo $\Phi_n(q)^6$,
\begin{align}
&\sum_{i_1+i_2+\cdots+i_m\leq n-1} \theta_q(i_1)\cdots\theta_q(i_m)
\notag\\[1mm]
&\quad\equiv
[n]^mq^{mr(r-n)/d}\frac{(q^{2r};q^d)_{(n-r)/d}^m}{(q^{d};q^d)_{(n-r)/d}^m}
\bigg\{\sum_{k=0}^{(n-r)/d}\frac{(q^r;q^d)_k^3(q^{d-r};q^d)_k}{(q^d;q^d)_k^3(q^{2r};q^d)_k}q^{dk}\bigg\}^{m-1}
\notag\\[1mm]
&\qquad\times\sum_{k=0}^{(n-r)/d}\frac{(q^r;q^d)_k^3(q^{d-r};q^d)_k}{(q^d;q^d)_k^3(q^{2r};q^d)_k}q^{dk}
\bigg\{1-m(2-q^n)[n]^2\sum_{t=1}^{k}\bigg(\frac{q^{dt-d+r}}{[dt-d+r]^2}+\frac{q^{dt}}{[dt]^2}\bigg)\bigg\}.
\label{eq:wei-c}
\end{align}
\end{theorem}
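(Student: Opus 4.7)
The plan is to reduce the $m$-fold sum to a single-sum $q$-supercongruence and then raise the result to the $m$-th power. First I would note that the hypotheses $n\equiv r\pmod d$ and $(r,d)=1$ force $\gcd(n,d)=1$, so $\Phi_n(q)\mid 1-q^{dj}$ only when $n\mid j$; in particular the denominator $(q^d;q^d)_k^6$ of $\theta_q(k)$ is coprime to $\Phi_n(q)$ throughout $0\le k\le n-1$. Meanwhile, for $(n-r)/d<k\le n-1$ the numerator $(q^r;q^d)_k^6$ picks up the factor $1-q^{r+d\cdot(n-r)/d}=1-q^n$ six times over, producing six copies of $\Phi_n(q)$. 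Hence $\theta_q(k)\equiv 0\pmod{\Phi_n(q)^6}$ throughout this range. Combined with the hypothesis $m\le d$, which gives $\sum_j i_j\le m(n-r)/d\le n-r\le n-1$ whenever every $i_j\le (n-r)/d$, the outer bound $i_1+\cdots+i_m\le n-1$ becomes automatic, and, modulo $\Phi_n(q)^6$,
$$
\sum_{i_1+\cdots+i_m\le n-1}\theta_q(i_1)\cdots\theta_q(i_m)\equiv S^m,\qquad S:=\sum_{k=0}^{(n-r)/d}\theta_q(k).
$$

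The heart of the argument is to establish the single-sum identity
$$
S\equiv[n]\,q^{r(r-n)/d}\frac{(q^{2r};q^d)_{(n-r)/d}}{(q^d;q^d)_{(n-r)/d}}\bigl\{P-(2-q^n)[n]^2R\bigr\}\pmod{\Phi_n(q)^5},
$$
where $P$ and $R$ denote the two sums appearing inside the braces of \eqref{eq:wei-c} with the exponent $m$ of the correction replaced by $1$. I would prove this via the creative microscoping technique of Guo and Zudilin: replace two of the six copies of $(q^r;q^d)_k/(q^d;q^d)_k$ in $\theta_q(k)$ by $(aq^r;q^d)_k(q^r/a;q^d)_k/\bigl((aq^d;q^d)_k(q^d/a;q^d)_k\bigr)$ to form a parametric sum $S(a)$. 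At each specialisation $a=q^{\pm n}$ the modified series terminates exactly at $k=(n-r)/d$ and is summed in closed form via a Watson-type ${}_8\phi_7\to{}_4\phi_3$ transformation coupled with a Jackson (or $q$-Dixon) summation, yielding the desired right-hand side modulo $(1-aq^n)(a-q^n)$. A companion congruence modulo $\Phi_n(q)^3$ is obtained by specialising $q$ to a primitive $n$-th root of unity and invoking El Bachraoui's lemma together with Guo and Li's lemma. The two coprime-modulus congruences are glued by the Chinese remainder theorem for coprime polynomials, and the limit $a\to1$ then collapses $(1-aq^n)(a-q^n)$ into $(1-q^n)^2$, which contributes $\Phi_n(q)^2$, upgrading the total precision to $\Phi_n(q)^5$.

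The final step is a multinomial expansion of $S^m$. Writing $S=A+B+\Phi_n(q)^5 g$ with $A=[n]\,q^{r(r-n)/d}\bigl((q^{2r};q^d)_{(n-r)/d}/(q^d;q^d)_{(n-r)/d}\bigr)P$ of $\Phi_n$-valuation $\ge 1$ and $B=-(2-q^n)[n]^3\,q^{r(r-n)/d}\bigl((q^{2r};q^d)_{(n-r)/d}/(q^d;q^d)_{(n-r)/d}\bigr)R$ of valuation $\ge 3$, one checks that for every $m\ge 2$ the only terms in the expansion of $(A+B+\Phi_n(q)^5 g)^m$ whose $\Phi_n$-valuation lies below $6$ are $A^m$ and $mA^{m-1}B$; all other contributions, including $A^{m-2}B^2$ (valuation $\ge m+4\ge 6$) and the error terms containing $g$ (valuation $\ge m-1+5\ge 6$), vanish modulo $\Phi_n(q)^6$. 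Rearranging, $S^m\equiv A^{m-1}(A+mB)\pmod{\Phi_n(q)^6}$, and this is precisely the right-hand side of \eqref{eq:wei-c}. The principal obstacle is therefore the single-sum identity to precision $\Phi_n(q)^5$ for arbitrary admissible $(d,r)$: Wei's earlier work \cite{Wei} handles only the case $(d,r)=(2,1)$ at the weaker modulus $[n]\Phi_n(q)^3$, and generalising simultaneously the parameters and the precision requires locating both the correct $a$-deformation and the specific $q$-hypergeometric transformation that produces the exact logarithmic correction $(2-q^n)[n]^2\sum_{t=1}^{k}\bigl(q^{dt-d+r}/[dt-d+r]^2+q^{dt}/[dt]^2\bigr)$.
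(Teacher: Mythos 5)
Your opening reduction of the $m$-fold sum to $S^m$ modulo $\Phi_n(q)^6$ is correct (this is exactly where $d\ge m$ and $\gcd(n,d)=1$ enter), and your closing multinomial bookkeeping --- keeping only $A^m$ and $mA^{m-1}B$ because $A$ is divisible by $\Phi_n(q)$, $B$ by $\Phi_n(q)^3$, and every other term by $\Phi_n(q)^6$ --- is also sound and does recover the right-hand side of \eqref{eq:wei-c}. The paper organizes this differently, carrying the full multiple sum through the entire parametric argument and invoking Lemma \ref{lemma-e} at each specialization, but your reorganization of that part is legitimate.

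The gap is in the step you yourself call the heart of the argument. You propose to prove $S\equiv A+B\pmod{\Phi_n(q)^5}$ by a single-parameter deformation $S(a)$, combining a closed form modulo $(1-aq^n)(a-q^n)$ from Watson's transformation with a ``companion congruence modulo $\Phi_n(q)^3$ obtained by specialising $q$ to a primitive $n$-th root of unity.'' Root-of-unity evaluation (including El Bachraoui's and Guo--Li's lemmas) can only certify divisibility by the \emph{first} power of $\Phi_n(q)$: a polynomial vanishing at every primitive $n$-th root of unity need not be divisible by $\Phi_n(q)^3$. With only the two moduli $(1-aq^n)(a-q^n)$ and $\Phi_n(q)$ available, the Chinese remainder theorem yields a congruence modulo $\Phi_n(q)(1-aq^n)(a-q^n)$, and the limit $a\to1$ then gives at best $\Phi_n(q)^3$ for the single sum --- two powers short of the $\Phi_n(q)^5$ you need (and which is genuinely needed when $m=2$). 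The paper obtains the missing powers not from the root-of-unity input but from two \emph{additional} deformation parameters: it works with a $\beta_q(k)$ carrying $a$, $b$, $c$, establishes four congruences modulo the pairwise coprime polynomials $\Phi_n(q)$, $(1-aq^n)(a-q^n)$, $(1-bq^n)(b-q^n)$, $(c-q^n)$ (the last three via Watson's transformation, together with a Sears ${}_{4}\phi_{3}$ transformation to normalize one of the resulting sums), glues them with the explicit CRT relations of Lemma \ref{lemma-d}, and then specializes $c=1$, $b=1$, $a\to1$ in succession, these specializations converting the corresponding factors into $\Phi_n(q)$, $\Phi_n(q)^2$ and $\Phi_n(q)^2$ respectively. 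To repair your argument you must replace the single-parameter microscoping by such a multi-parameter scheme; as written, the claimed precision $\Phi_n(q)^5$ for the single sum is not justified.
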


Setting $n=p$ and taking $q\to 1$ in Theorem \ref{thm-c}, we catch
hold of the formula.

\begin{corollary}\label{cor-c}
Let $p$ be an odd prime and $d, m, r$ positive integers subject to
$p\equiv r\pmod{d}$, $(r,d)=1$ and $d\geq m\geq 2$. Then for
$i_1,i_2,\ldots,i_m\geq0$,
\begin{align*}
&\sum_{i_1+i_2+\cdots+i_m\leq p-1}
(2di_1+r)\cdots(2di_m+r)\frac{(\frac{r}{d})_{i_1}^6\cdots(\frac{r}{d})_{i_m}^6}{(1)_{i_1}^6\cdots(1)_{i_m}^6}
\\[1mm]
&\quad\equiv
p^m\frac{(\frac{2r}{d})_{(p-r)/d}^m}{(1)_{(p-r)/d}^m}\bigg\{\sum_{k=0}^{(p-r)/d}\frac{(\frac{r}{d})_k^3(\frac{d-r}{d})_k}{(1)_k^3(\frac{2r}{d})_k}\bigg\}^{m-1}
\\[1mm]
&\qquad
\times\sum_{k=0}^{(p-r)/d}\frac{(\frac{r}{d})_k^3(\frac{d-r}{d})_k}{(1)_k^3(\frac{2r}{d})_k}
\bigg\{1-mp^{2}\sum_{t=1}^{k}\bigg(\frac{1}{(dt-d+r)^2}+\frac{1}{(dt)^2}\bigg)\bigg\}\pmod{p^{6}}.
\end{align*}
\end{corollary}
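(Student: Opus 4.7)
The plan is to deduce Corollary~\ref{cor-c} directly from Theorem~\ref{thm-c} by setting $n=p$ and then passing to the classical limit $q\to 1$. Since $\Phi_p(q)=1+q+\cdots+q^{p-1}$ satisfies $\Phi_p(1)=p$, the modulus $\Phi_p(q)^6$ in Theorem~\ref{thm-c} degenerates to $p^6$ upon substitution $q=1$, and each $q$-factor on the two sides of~\eqref{eq:wei-c} collapses to its classical counterpart.

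Concretely, I would apply the standard limits
\[
\lim_{q\to 1}[k]_q = k,\qquad
\lim_{q\to 1}\frac{(q^a;q^d)_k}{(q^d;q^d)_k} = \frac{(a/d)_k}{(1)_k},\qquad
\lim_{q\to 1}q^{\alpha} = 1
\]
to each factor in~\eqref{eq:wei-c}. Under these limits the weight $\theta_q(k)$ becomes $(2dk+r)(r/d)_k^6/(1)_k^6$; the prefactor $[n]^m q^{mr(r-n)/d}(q^{2r};q^d)_{(n-r)/d}^m/(q^d;q^d)_{(n-r)/d}^m$ becomes $p^m(2r/d)_{(p-r)/d}^m/(1)_{(p-r)/d}^m$; each inner basic hypergeometric sum turns into $\sum_k(r/d)_k^3((d-r)/d)_k/[(1)_k^3(2r/d)_k]$; and the harmonic correction $(2-q^n)[n]^2\sum_t(q^{dt-d+r}/[dt-d+r]^2+q^{dt}/[dt]^2)$ becomes $p^2\sum_t(1/(dt-d+r)^2+1/(dt)^2)$, since $2-q^n\to 1$ and $[k]\to k$. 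Assembling these limits yields precisely the right-hand side of Corollary~\ref{cor-c}.

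To upgrade this algebraic limit into a genuine $p^6$-divisibility, I would argue in the standard fashion: the difference of the two sides of~\eqref{eq:wei-c} equals $\Phi_p(q)^6$ times a rational function regular at $q=1$, so evaluation at $q=1$ transfers divisibility by $\Phi_p(q)^6$ to divisibility by $p^6$. The main, and essentially only, obstacle I foresee is the coprimality bookkeeping: the denominators built from $(q^d;q^d)_k$, $(q^{2r};q^d)_k$, $[dt-d+r]$, and $[dt]$ with $1\le t,k\le(p-r)/d$ must be shown not to be divisible by $\Phi_p(q)$. The hypothesis $(r,d)=1$ together with $p\equiv r\pmod{d}$ forces the relevant exponents $2r+jd$, $dt-d+r$, and $dt$ to avoid multiples of $p$ in the allowed range (since otherwise one would derive $r\equiv 0\pmod{d}$), which rules out any hidden factor of $\Phi_p(q)$ in the denominators, and the corollary follows.
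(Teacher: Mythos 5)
Your proposal is correct and coincides with the paper's own (one-line) derivation: the author likewise obtains Corollary~\ref{cor-c} by setting $n=p$ in Theorem~\ref{thm-c} and letting $q\to1$, using $\Phi_p(1)=p$. Your additional remarks on the regularity of the denominators at $q=1$ only make explicit the bookkeeping that the paper leaves implicit.
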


The $(d,m,r)=(3,2,1)$ and  $(d,m,r)=(3,3,1)$  cases of Corollary
\ref{cor-c} are the following conclusions associated with
\eqref{van-hamme-b}.

\begin{corollary}\label{cor-d}
Let $p\equiv 1\pmod{6}$ be a prime.  Then for $i,j\geq0$,
\begin{align*}
&\sum_{i+j\leq p-1}
(6i+1)(6j+1)\frac{(\frac{1}{3})_i^6(\frac{1}{3})_j^6}{(1)_i^6(1)_j^6}
\\[1mm]
&\quad \equiv
p^{2}\frac{(\frac{2}{3})_{(p-1)/3}^2}{(1)_{(p-1)/3}^2}\sum_{k=0}^{(p-1)/3}\frac{(\frac{1}{3})_k^3}{(1)_k^3}
\\[1mm]
&\qquad\times\sum_{k=0}^{(p-1)/3}\frac{(\frac{1}{3})_k^3}{(1)_k^3}
\bigg\{1-2p^{2}\sum_{t=1}^{k}\bigg(\frac{1}{(3t-2)^2}+\frac{1}{(3t)^2}\bigg)\bigg\}\pmod{p^{6}}.
\end{align*}
\end{corollary}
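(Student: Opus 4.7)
The plan is to derive Corollary \ref{cor-d} as a direct specialization of Corollary \ref{cor-c} with the parameter choice $(d,m,r)=(3,2,1)$. First I would check the three hypotheses of Corollary \ref{cor-c}: the coprimality $(r,d)=\gcd(1,3)=1$ is trivial, the inequality $d\geq m\geq 2$ becomes $3\geq 2\geq 2$, and the congruence $p\equiv r\pmod{d}$ reads $p\equiv 1\pmod{3}$, which is implied by the stronger hypothesis $p\equiv 1\pmod{6}$ assumed here (this stronger hypothesis just rules out $p=2,3$).

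Next I would perform the substitution termwise. The factors $(2di_s+r)$ become $(6i_s+1)$, the shifted factorials $(\tfrac{r}{d})_{i_s}$ become $(\tfrac{1}{3})_{i_s}$, the prefactor $p^m$ becomes $p^2$, and $(\tfrac{2r}{d})_{(p-r)/d}^m$ becomes $(\tfrac{2}{3})_{(p-1)/3}^2$. The key observation that simplifies the two inner sums is that with these parameters one has $d-r=2=2r$, so
\[
\Bigl(\tfrac{d-r}{d}\Bigr)_{\!k}=\Bigl(\tfrac{2}{3}\Bigr)_{\!k}=\Bigl(\tfrac{2r}{d}\Bigr)_{\!k},
\]
and therefore the numerator factor $(\tfrac{d-r}{d})_k$ cancels exactly against the denominator factor $(\tfrac{2r}{d})_k$ inside the hypergeometric sums, reducing them to
\[
\sum_{k=0}^{(p-1)/3}\frac{(\tfrac{1}{3})_k^{3}}{(1)_k^{3}}.
\]
Finally, the error-term coefficient $m p^{2}$ specialises to $2p^{2}$ and the inner nested sum over $t$ becomes exactly $\sum_{t=1}^{k}\bigl(\tfrac{1}{(3t-2)^2}+\tfrac{1}{(3t)^2}\bigr)$, matching the stated right-hand side.

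Since the whole argument is a parameter substitution into an already established result, there is no real obstacle; the only point that requires mild care is keeping track of the cancellation $(\tfrac{2}{3})_k/(\tfrac{2}{3})_k=1$ inside the sums while remembering that the separate factor $(\tfrac{2}{3})_{(p-1)/3}^2$ sitting outside the sums is not cancelled by anything. Once this bookkeeping is done, the claimed congruence is read off directly from Corollary \ref{cor-c}.
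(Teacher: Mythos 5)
Your proposal is correct and is exactly the paper's route: the paper obtains Corollary \ref{cor-d} by setting $(d,m,r)=(3,2,1)$ in Corollary \ref{cor-c}, and your observation that $\bigl(\tfrac{d-r}{d}\bigr)_k=\bigl(\tfrac{2r}{d}\bigr)_k=\bigl(\tfrac{2}{3}\bigr)_k$ cancels inside the sums is the only nontrivial bookkeeping involved. Nothing further is needed.
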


\begin{corollary}\label{cor-e}
Let $p\equiv 1\pmod{6}$ be a prime.  Then for $i,j,k\geq0$,
\begin{align*}
&\sum_{i+j+k\leq p-1}
(6i+1)(6j+1)(6k+1)\frac{(\frac{1}{3})_i^6(\frac{1}{3})_j^6(\frac{1}{3})_k^6}{(1)_i^6(1)_j^6(1)_k^6}
\end{align*}
\begin{align*}
&\quad \equiv
p^{3}\frac{(\frac{2}{3})_{(p-1)/3}^3}{(1)_{(p-1)/3}^3}\bigg\{\sum_{k=0}^{(p-1)/3}\frac{(\frac{1}{3})_k^3}{(1)_k^3}\bigg\}^2
\\[1mm]
&\qquad\times\sum_{k=0}^{(p-1)/3}\frac{(\frac{1}{3})_k^3}{(1)_k^3}
\bigg\{1-3p^{2}\sum_{t=1}^{k}\bigg(\frac{1}{(3t-2)^2}+\frac{1}{(3t)^2}\bigg)\bigg\}\pmod{p^{6}}.
\end{align*}
\end{corollary}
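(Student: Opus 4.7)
The plan is to derive Corollary \ref{cor-e} by a direct specialization of Corollary \ref{cor-c} with the parameter choice $(d,m,r)=(3,3,1)$, as the surrounding text promises. First I would verify that the hypotheses of Corollary \ref{cor-c} are met: $p\equiv 1\pmod 6$ forces $p\equiv 1\pmod 3$, so $p\equiv r\pmod d$; the pair $(r,d)=(1,3)$ is coprime; and $d=3\geq m=3\geq 2$, so the chain of inequalities $d\geq m\geq 2$ holds.

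Next I would substitute the chosen values into the displayed congruence of Corollary \ref{cor-c}. The translations are routine: $2di_j+r=6i_j+1$, $\tfrac{r}{d}=\tfrac13$, $\tfrac{2r}{d}=\tfrac23$, $p^m=p^3$, $(p-r)/d=(p-1)/3$, $dt-d+r=3t-2$, $dt=3t$, and $mp^2=3p^2$. The left-hand side of Corollary \ref{cor-c} becomes precisely the triple sum in the statement of Corollary \ref{cor-e}, and the prefactor $\frac{(2r/d)_{(p-r)/d}^m}{(1)_{(p-r)/d}^m}$ turns into $\frac{(2/3)_{(p-1)/3}^3}{(1)_{(p-1)/3}^3}$.

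The one simplification worth highlighting, and the only place where the specialization is not completely mechanical, is that with $d=3$ and $r=1$ we have $\tfrac{d-r}{d}=\tfrac{2r}{d}=\tfrac23$. Consequently the factor $(q^{d-r};q^d)_k$ in the numerator cancels the factor $(q^{2r};q^d)_k$ in the denominator of the inner sums once one passes to the classical limit, i.e.\ $(\tfrac{2}{3})_k/(\tfrac{2}{3})_k=1$. This collapses
\[
\sum_{k=0}^{(p-r)/d}\frac{(r/d)_k^3\,((d-r)/d)_k}{(1)_k^3\,(2r/d)_k}
\quad\text{into}\quad
\sum_{k=0}^{(p-1)/3}\frac{(1/3)_k^3}{(1)_k^3},
\]
yielding both the $\{\ \cdot\ \}^{m-1}=\{\ \cdot\ \}^{2}$ factor and the outer summation factor in Corollary \ref{cor-e}. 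With this collapse recorded, reading off each piece of Corollary \ref{cor-c} gives the stated congruence modulo $p^6$.

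There is essentially no obstacle: the content of Corollary \ref{cor-e} is entirely contained in Corollary \ref{cor-c}, and the only care needed is in checking the coincidence $\tfrac{d-r}{d}=\tfrac{2r}{d}$ that forces the rational function inside the inner sums to reduce to $(1/3)_k^3/(1)_k^3$. Once that cancellation is noted, the rest is a bookkeeping exercise that matches the displayed formula term by term.
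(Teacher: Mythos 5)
Your proposal is correct and coincides with the paper's own derivation: the paper obtains Corollary \ref{cor-e} exactly as the $(d,m,r)=(3,3,1)$ specialization of Corollary \ref{cor-c}, and your check of the hypotheses together with the cancellation $(\tfrac{d-r}{d})_k=(\tfrac{2r}{d})_k=(\tfrac23)_k$ in the inner sums is precisely the bookkeeping the paper leaves implicit.
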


The rest of the paper is arranged as follows. By means of several
summation and transformation formulas for basic hypergeometric
series, two forms of the Chinese remainder theorem for coprime
polynomials, the creative microscoping method introduced by Guo and
Zudilin
 \cite{GuoZu-b}, Guo and Li's lemma,
and El Bachraoui's lemma, we shall prove Theorem \ref{thm-a} in the
next section. Similarly, the proof of Theorems \ref{thm-b} and
\ref{thm-c} will be displayed in Sections 3 and 4, respectively.

\section{Proof of Theorem \ref{thm-a}}

For proving Theorem \ref{thm-a}, we require the following two
lemmas, where the former is from Guo and Li \cite{GuoLi} and the
latter can be viewed in El Bachraoui \cite{Mohamed-a} (see also
\cite{WY-a}).

\begin{lemma}\label{lemma-a}
Let $n$ be a positive odd integer. Let $\{a_0,a_1,\ldots,a_{n-1}\}$
be a sequence of numbers such that $a_k\equiv
-a_{(n-1)/2-k}\pmod{\Phi_n(q)}$ for $0\leq k \leq(n-1)/2$ and
$a_k\equiv -a_{(3n-1)/2-k}\pmod{\Phi_n(q)}$ for $(n+1)/2\leq k \leq
n-1$. Then
$$
\sum_{i+j\leq n-1}a_ia_j \equiv0\pmod{\Phi_n(q)}.
$$
\end{lemma}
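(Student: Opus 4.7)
The plan is to reduce the double sum to two ``half-sums'' each of which vanishes modulo $\Phi_n(q)$. Write $m=(n-1)/2$, so that the two hypotheses read $a_k+a_{m-k}\equiv 0$ for $0\leq k\leq m$ and $a_k+a_{3m+1-k}\equiv 0$ for $m+1\leq k\leq 2m$ (all congruences modulo $\Phi_n(q)$). Introduce the half-sums
\[
A=\sum_{k=0}^{m}a_k,\qquad B=\sum_{k=m+1}^{2m}a_k.
\]
Summing each symmetry over its range immediately gives $2A\equiv 0$ and $2B\equiv 0\pmod{\Phi_n(q)}$; since $\mathbb{Z}[q]/\Phi_n(q)\cong\mathbb{Z}[\zeta_n]$ is an integral domain (hence has no $2$-torsion), this forces $A\equiv B\equiv 0\pmod{\Phi_n(q)}$.

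Next I would decompose $S=\sum_{i+j\leq 2m}a_ia_j$ according to which of the halves $[0,m]$ or $[m+1,2m]$ contains each index. Pairs with both indices in $[0,m]$ contribute exactly $A^2$ (the constraint $i+j\leq 2m$ being automatic), pairs with both in $[m+1,2m]$ contribute $0$ (as they force $i+j\geq 2m+2$), and the two ``mixed'' blocks are equal under $i\leftrightarrow j$. Writing
\[
T=\sum_{\substack{0\leq i\leq m,\,m+1\leq j\leq 2m\\ i+j\leq 2m}}a_ia_j,
\]
this gives $S=A^2+2T$, so the task reduces to showing $T\equiv 0\pmod{\Phi_n(q)}$.

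For this step, I would define the complementary mixed sum $U$ on the same rectangle $[0,m]\times[m+1,2m]$ but with $i+j\geq 2m+1$. Then $T$ and $U$ partition that rectangle, so $T+U=AB\equiv 0\pmod{\Phi_n(q)}$. The crucial observation is that the joint substitution $(i,j)\mapsto(m-i,\,3m+1-j)$ is an involution of $[0,m]\times[m+1,2m]$ satisfying $i'+j'=4m+1-(i+j)$, and therefore exchanges the subsets $\{i+j\leq 2m\}$ and $\{i+j\geq 2m+1\}$; meanwhile, the two hypotheses combine to give $a_ia_j\equiv a_{m-i}a_{3m+1-j}\pmod{\Phi_n(q)}$. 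Hence $T\equiv U$, which together with $T+U\equiv 0$ forces $2T\equiv 0$, so $T\equiv 0$ and finally $S\equiv 0\pmod{\Phi_n(q)}$.

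The only step requiring careful bookkeeping is the verification of the bijection: one must check that the index map $(i,j)\mapsto(m-i,\,3m+1-j)$ sends $[0,m]\times[m+1,2m]$ into itself and exchanges the two constraint regions, tracking the boundary cases (e.g.\ $(0,2m)\mapsto(m,m+1)$). After that, the argument is purely combinatorial and I do not anticipate any serious obstacles.
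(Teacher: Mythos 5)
Your argument is correct and complete. Note first that the paper does not actually prove this lemma: it is quoted from Guo and Li \cite{GuoLi} without proof, so there is no in-text argument to compare against line by line. Your proof stands on its own: with $m=(n-1)/2$, the identity $2A\equiv 0$ (and likewise $2B\equiv 0$) follows by summing each antisymmetry over its range, and since the residue ring modulo the irreducible polynomial $\Phi_n(q)$ is a domain in which $2\neq 0$, you correctly conclude $A\equiv B\equiv 0$. The quadrant decomposition $S=A^2+2T$ is right (the block $[m+1,2m]^2$ indeed contributes nothing because $i+j\geq 2m+2$), and the involution $(i,j)\mapsto(m-i,\,3m+1-j)$ does preserve the rectangle $[0,m]\times[m+1,2m]$, reverses the constraint via $i'+j'=4m+1-(i+j)$, and satisfies $a_ia_j\equiv a_{i'}a_{j'}$ since the two sign changes cancel; combined with $T+U=AB\equiv 0$ this gives $T\equiv 0$ and hence $S\equiv 0$. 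This is the same symmetry-pairing style of argument that Guo and Li use (they complete the triangle $i+j\leq n-1$ to the full square and kill the complementary region by an analogous index reflection), so you have not taken a genuinely different route, only organized the bookkeeping through the two half-intervals rather than through the full sum $\sum_{k=0}^{n-1}a_k$. One small point of precision: ``integral domain'' alone does not rule out $2$-torsion (consider characteristic $2$); what you need, and what holds here, is that $2$ is a nonzero element of the domain $\mathbb{Z}[\zeta_n]$ (equivalently, the residue field $\mathbb{Q}(\zeta_n)$ has characteristic $0$), so $2x\equiv 0$ forces $x\equiv 0$.
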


\begin{lemma}\label{lemma-b}
Let $d,m,n$ be positive integers subject to $d\geq m\geq2$ and
$n\equiv1\pmod{d}$. Let $\{\lambda(k)\}_{k=0}^\infty$ be a complex
sequence. If $\lambda(k)=0$ for $(n+d-1)/d\leq k\leq n-1$, then
$$
\sum_{k_1+k_2+\cdots+k_m\leq
n-1}\lambda(k_1)\lambda(k_2)\cdots\lambda(k_m)
=\Bigg\{\sum_{k=0}^{(n-1)/d}\lambda(k)\Bigg\}^m.
$$
Furthermore, if $\lambda(ln+k)/\lambda(ln)=\lambda(k)$ for all
nonnegative integers $k$ and $l$ with $0\leqslant k\leqslant n-1$,
then
  \begin{align*}
&\sum_{k_1+k_2+\cdots+k_m=
ln+k}\lambda(k_1)\lambda(k_2)\cdots\lambda(k_m)=\sum_{k_1+k_2+\cdots+k_m=
k}\lambda(k_1)\lambda(k_2)\cdots\lambda(k_m)
\\[1mm]
&\qquad\:\:\times\sum_{s_1=0}^{l}\lambda(s_1n)\sum_{s_2=0}^{l-s_1}\lambda(s_2n)\cdots\sum_{s_{m-1}=0}^{l-s_1-\cdots-s_{m-2}}\lambda(s_{m-1}n)
\lambda((l-s_1-\cdots-s_{m-1})n).
\end{align*}
\end{lemma}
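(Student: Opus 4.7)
The plan is to reduce both claims of the lemma to a common support/decomposition principle. The hypothesis $n\equiv 1\pmod d$ makes $(n-1)/d$ an integer, and the vanishing assumption becomes $\lambda(k)=0$ for $(n-1)/d+1\leq k\leq n-1$; combined with $d\geq m$ this yields the pivotal inequality $m(n-1)/d\leq n-1$, which underlies everything that follows.

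For the first identity, I would observe that the only tuples $(k_1,\dots,k_m)$ contributing to the multiple sum lie in $\{0,1,\dots,(n-1)/d\}^m$, because $k_j\leq n-1$ on the domain and any $k_j$ in the forbidden range kills the product. On this grid the constraint $k_1+\cdots+k_m\leq n-1$ is automatic by the pivotal inequality, so the restricted multiple sum coincides with the full Cartesian product $\prod_{j=1}^m \sum_{k=0}^{(n-1)/d}\lambda(k)=\bigl(\sum_{k=0}^{(n-1)/d}\lambda(k)\bigr)^m$, giving the first claim.

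For the second identity, I would write each $k_j$ uniquely as $k_j=s_jn+r_j$ with $s_j\geq 0$ and $0\leq r_j\leq n-1$, and invoke the multiplicative hypothesis to factor $\lambda(k_j)=\lambda(s_jn)\lambda(r_j)$. The vanishing hypothesis (still in force from the first part) again forces $r_j\leq (n-1)/d$ on the support, whence $\sum_j r_j\leq m(n-1)/d\leq n-1<n$. Together with $\sum_jk_j=ln+k$ and $0\leq k\leq n-1$, this no-carry bound forces $\sum_j r_j=k$ and $\sum_j s_j=l$ exactly. The multiple sum then factors cleanly as $\bigl(\sum_{r_1+\cdots+r_m=k}\prod_j\lambda(r_j)\bigr)\cdot\bigl(\sum_{s_1+\cdots+s_m=l,\,s_j\geq 0}\prod_j\lambda(s_jn)\bigr)$, and writing the second factor as an iterated sum with $s_m:=l-s_1-\cdots-s_{m-1}$ reproduces the nested expression in the statement.

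The only substantive step is the no-carry argument in Part~2: one must rule out the possibility that $\sum_j r_j\in\{k+n,k+2n,\dots\}$. This is precisely where the combination of $d\geq m$ with the explicit vanishing range $(n+d-1)/d\leq k\leq n-1$ is used, since it is exactly what caps $\sum_j r_j$ strictly below $n$. Once the uniqueness of the $(s_j,r_j)$-decomposition and its preservation of the total indices $k$ and $l$ are secured, the remainder of the proof is a routine interchange of finite summations.
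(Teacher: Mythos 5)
Your proof is correct. Note that the paper never actually proves Lemma~\ref{lemma-b} --- it is quoted from El Bachraoui --- but your argument for the first identity is essentially the one the paper does spell out for the analogous Lemma~\ref{lemma-e}: the vanishing range together with $d\geq m$ confines the support to the cube $\{0,1,\dots,(n-1)/d\}^m$, on which the simplex constraint $k_1+\cdots+k_m\leq n-1$ is vacuous because $m(n-1)/d\leq n-1$, so the sum factors into an $m$-th power. Your treatment of the second identity (the base-$n$ decomposition $k_j=s_jn+r_j$, the factorization $\lambda(k_j)=\lambda(s_jn)\lambda(r_j)$, and the no-carry bound $\sum_j r_j\leq m(n-1)/d<n$ forcing $\sum_j r_j=k$ and $\sum_j s_j=l$) is also sound, and you are right to insist that the vanishing hypothesis remains in force in that part: it is genuinely needed, not merely convenient. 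For instance, with $m=2$, $d=2$, $n=3$, $l=k=1$, the left-hand side of the second identity equals $4\lambda(1)\lambda(3)+\lambda(2)^2$ while the right-hand side equals $4\lambda(1)\lambda(3)$, so the claim fails unless $\lambda(2)=0$, which is exactly what the vanishing range supplies. The lemma's wording leaves this dependence ambiguous, and your proof resolves it the only way that makes the statement true.
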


Now we are ready to give a  parametric generalization of Theorem
\ref{thm-a}.

\begin{theorem}\label{thm-d}
Let $n$ be a positive odd integer and
$$\lambda_q(k)=[4k+1]\frac{(q;q^2)_k^2(aq,q/a;q^2)_k}{(q^2;q^2)_k^2(q^2/a,aq^2;q^2)_k}\quad\text{with}\quad k\geq
  0.$$
Then, modulo $[n]\Phi_n(q)^2(1-aq^n)(a-q^n)$,
\begin{align}
\sum_{i+j\leq n-1}\lambda_q(i)\lambda_q(j) &\equiv
[n]^2q^{1-n}+[n]^2q^{1-n}\frac{(1-aq^n)(a-q^n)}{(1-a)^2}
\notag\\[1mm]
&\quad\times\bigg\{1-\frac{(q^2;q^2)_{(n-1)/2}^4}{(aq^2,q^2/a;q^2)_{(n-1)/2}^2}\bigg\}.
\label{eq:wei-d}
\end{align}
\end{theorem}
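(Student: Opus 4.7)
My plan is to apply the creative microscoping method of Guo and Zudilin in tandem with the Chinese Remainder Theorem for coprime polynomials. The target modulus factors as
\begin{equation*}
[n]\Phi_n(q)^2(1-aq^n)(a-q^n)=\Phi_n(q)^3\prod_{\substack{d\mid n\\1<d<n}}\Phi_d(q)\cdot(1-aq^n)(a-q^n),
\end{equation*}
in which the three groups of factors are pairwise coprime in $\mathbb{Z}[q,a]$; it thus suffices to verify \eqref{eq:wei-d} modulo each group separately and then reassemble the pieces by CRT.

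For the $a$-factor $(1-aq^n)(a-q^n)$ I would specialize $a=q^{-n}$, the case $a=q^{n}$ being symmetric via $a\leftrightarrow 1/a$. With $a=q^{-n}$ the entry $(aq;q^{2})_{k}=(q^{1-n};q^{2})_{k}$ picks up the zero $(1-q^{0})$ whenever $k\geq (n+1)/2$, so $\lambda_{q}(k)$ vanishes on that range. Lemma~\ref{lemma-b} with $d=m=2$ then collapses the double sum into the square of $\sum_{k=0}^{(n-1)/2}\lambda_{q}(k)\big|_{a=q^{-n}}$. The latter is a terminating very-well-poised ${}_{6}\phi_{5}$ series in base $q^{2}$, with the $[4k+1]$ factor serving as the well-poised head; I would evaluate it via the classical ${}_{6}\phi_{5}$ summation, obtaining $[n]q^{(1-n)/2}$, whose square is exactly $[n]^{2}q^{1-n}$. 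This matches the right-hand side of \eqref{eq:wei-d} at $a=q^{-n}$, where the second summand is killed by $(1-aq^{n})$.

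For the cyclotomic factor $\Phi_{n}(q)^{3}$ I would first use $q^{n}\equiv 1\pmod{\Phi_{n}(q)}$ to simplify the right-hand side: since $(1-aq^{n})(a-q^{n})/(1-a)^{2}\equiv -1\pmod{\Phi_{n}(q)}$, the right-hand side of \eqref{eq:wei-d} reduces, modulo $\Phi_{n}(q)^{3}$, to
\begin{equation*}
[n]^{2}q^{1-n}\,\frac{(q^{2};q^{2})_{(n-1)/2}^{4}}{(aq^{2},q^{2}/a;q^{2})_{(n-1)/2}^{2}}.
\end{equation*}
On the left-hand side, the antipodal involution $k\leftrightarrow (n-1)/2-k$, combined with the relation $[4((n-1)/2-k)+1]\equiv -q^{-4k-1}[4k+1]\pmod{\Phi_{n}(q)}$ and the corresponding symmetry of the $(aq,q/a;q^{2})_{k}/(q^{2}/a,aq^{2};q^{2})_{k}$ block, puts Lemma~\ref{lemma-a} in force simultaneously on both summation indices, yielding divisibility of the left-hand side by $\Phi_{n}(q)^{2}$. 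I would then pin down the residue modulo $\Phi_{n}(q)^{3}$ by recognising the resulting single-sum quantity through a $q$-hypergeometric transformation (for instance Watson's terminating ${}_{8}\phi_{7}\to{}_{4}\phi_{3}$), matching the displayed formula above. An analogous but easier antipodal argument at each divisor $d\mid n$ with $1<d<n$ yields the congruence modulo $\Phi_{d}(q)$.

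Assembling the three pieces via CRT for coprime polynomials delivers \eqref{eq:wei-d}. The main obstacle is the $\Phi_{n}(q)^{3}$ step: Lemma~\ref{lemma-a} alone only supplies a single power of $\Phi_{n}(q)$, so extracting the full three powers forces a simultaneous second-order pairing in both summation indices together with a carefully chosen $q$-hypergeometric transformation that identifies the precise residue against the asymmetric denominator $(aq^{2},q^{2}/a;q^{2})_{(n-1)/2}^{2}$. This interplay between the microscoping parameter $a$ (which controls the $a$-dependence on the nose at $a=q^{\pm n}$) and the depth of the cyclotomic pairing (which controls the $\Phi_{n}(q)$-behaviour) is the technical heart of the argument.
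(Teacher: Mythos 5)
Your handling of the factor $(1-aq^n)(a-q^n)$ is sound and agrees in spirit with the paper (at $a=q^{\pm n}$ the series terminates at $k=(n-1)/2$, Lemma~\ref{lemma-b} collapses the double sum to a square, and the single sum evaluates to $[n]q^{(1-n)/2}$), and your root-of-unity argument for the proper divisors $\Phi_d(q)$ with $1<d<n$ is essentially what the paper does. The genuine gap is the $\Phi_n(q)^3$ step, which you yourself flag as ``the technical heart'' but never actually carry out. Two concrete problems. First, Lemma~\ref{lemma-a} delivers only a \emph{single} power of $\Phi_n(q)$; your claim that running the antipodal pairing ``simultaneously on both summation indices'' yields $\Phi_n(q)^2$ is unsubstantiated --- the map $(i,j)\mapsto((n-1)/2-i,(n-1)/2-j)$ does not preserve the triangular index set $i+j\le n-1$, and nothing in the paper (or in the standard toolbox) upgrades this symmetry to two cyclotomic powers. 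Second, even granting such divisibility, you must still identify the residue of the double sum modulo $\Phi_n(q)^3$ against the nonzero target $[n]^2q^{1-n}(q^2;q^2)_{(n-1)/2}^4/(aq^2,q^2/a;q^2)_{(n-1)/2}^2$; there is no ``resulting single-sum quantity'' to feed into Watson's transformation, because for generic $a$ the double sum does not collapse modulo a power of $\Phi_n(q)$.

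The paper sidesteps both difficulties with a different decomposition: it introduces a second microscoping parameter $b$, working with $\beta_q(k)=[4k+1]\frac{(aq,q/a,bq,q/b;q^2)_k}{(q^2/a,aq^2,q^2/b,bq^2;q^2)_k}$, proves the congruence modulo the three pairwise coprime moduli $[n]$, $(1-aq^n)(a-q^n)$ and $(1-bq^n)(b-q^n)$ (the last two via Watson's $_8\phi_7$ transformation together with Lemma~\ref{lemma-b}), glues the pieces by the Chinese remainder theorem for coprime polynomials, and only then sets $b=1$. Since $(1-bq^n)(b-q^n)\big|_{b=1}=(1-q^n)^2$ is divisible by $\Phi_n(q)^2$, the two extra cyclotomic powers appear automatically in the modulus, and the residue is then reduced to the stated form by the $q$-Saalsch\"utz identity and the algebraic identity \eqref{relation}. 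To repair your outline you would need either to import this second parameter, or to supply an independent proof of the congruence modulo $\Phi_n(q)^3$ for generic $a$ --- which is a substantial result in its own right, not a routine transformation step.
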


\begin{proof}
It is evident that the $n=1$ case of \eqref{eq:wei-d} is true. We
now assume that $n>1$. Set
$$\beta_q(k)=[4k+1]\frac{(aq,q/a,bq,q/b;q^2)_k}{(q^2/a,aq^2,q^2/b,bq^2;q^2)_k}.$$
A $q$-congruence due to Guo and Schlosser \cite[Lemma 3.1]{GS} can
be stated  as
\begin{equation}
\frac{(aq;q^2)_{(n-1)/2-k}}{(q^2/a;q^2)_{(n-1)/2-k}}\equiv
(-a)^{(n-1)/2-2k}\frac{(aq;q^2)_{k}}{(q^2/a;q^2)_{k}}q^{(n-1)^2/4+k}\pmod{\Phi_n(q)},
  \label{eq:wei-e}
\end{equation}
where $0\leq k\leq (n-1)/2$. Via \eqref{eq:wei-e}, it is not
difficult to understand that
\begin{equation}
\beta_q(k)\equiv -\beta_q((n-1)/2-k)\pmod{\Phi_n(q)}.
  \label{eq:wei-f}
\end{equation}
For $(n+1)/2\leq k\leq n-1$, we have
\begin{equation}
\frac{(aq;q^2)_{(3n-1)/2-k}}{(q^2/a;q^2)_{(3n-1)/2-k}}\equiv
(-a)^{(3n-1)/2-2k}\frac{(aq;q^2)_{k}}{(q^2/a;q^2)_{k}}q^{(3n-1)^2/4+k}\pmod{\Phi_n(q)}.
  \label{eq:wei-g}
\end{equation}
Through \eqref{eq:wei-g}, it is ordinary to discover that
\begin{equation}
\beta_q(k)\equiv -\beta_q((3n-1)/2-k)\pmod{\Phi_n(q)}.
  \label{eq:wei-h}
\end{equation}
According to \eqref{eq:wei-f}, \eqref{eq:wei-h}, and Lemma
\ref{lemma-a}, we obtain
\begin{align}
\sum_{i+j\leq n-1}\beta_q(i)\beta_q(j) &\equiv0\pmod{\Phi_n(q)}.
 \label{eq:wei-i}
\end{align}

Let $\zeta\neq 1$ be an $n$-th root of unity, not necessarily
primitive. Namely, $\zeta$ is a primitive $u$-th root of unity with
$u\mid n$. Thanks to the $n=u$ case of \eqref{eq:wei-i}, there is
$$
\sum_{i+j\leq u-1}\beta_\zeta(i)\beta_\zeta(j)=0.
$$
For all integers $k$ and $l$ satisfying $0\leqslant k\leqslant u-1$,
it is easy to realize that
$$\frac{\beta_\zeta(lu+k)}{\beta_\zeta(lu)}=\beta_\zeta(k).$$
With the help of the $d=m=2$ case of Lemma \ref{lemma-b}, we get
\begin{align*}
\sum_{i+j\leq n-1}\beta_\zeta(i)\beta_\zeta(j)
&=\sum_{t=0}^{n-1}\sum_{j=0}^{t}\beta_\zeta(j)\beta_\zeta(t-j)
\\[1mm]
&=\sum_{l=0}^{n/u-1}\sum_{k=0}^{u-1}\sum_{j=0}^{lu+k}\beta_\zeta(j)\beta_\zeta(lu+k-j)
\\[1mm]
&=\sum_{l=0}^{n/u-1}\sum_{k=0}^{u-1}\sum_{i=0}^{l}\beta_\zeta(iu)\beta_\zeta((l-i)u)\sum_{j=0}^{k}\beta_\zeta(j)\beta_\zeta(k-j)
\\[1mm]
&=\sum_{l=0}^{n/u-1}\sum_{i=0}^{l}\beta_\zeta(iu)\beta_\zeta((l-i)u)\sum_{k=0}^{u-1}\sum_{j=0}^{k}\beta_\zeta(j)\beta_\zeta(k-j)
\\[1mm]
&=0.
\end{align*}
Because the upper equation is correct for any $n$-th root of unit
$\zeta\neq 1$, we conclude that
\begin{equation}\label{eq:wei-j}
\sum_{i+j\leq n-1}\beta_q(i)\beta_q(j)\equiv 0\pmod{[n]}.
\end{equation}

Following Gasper and Rahman \cite{Gasper}, define the basic
hypergeometric series by
$$
_{r}\phi_{s}\left[\begin{array}{c}
a_1,a_2,\ldots,a_{r}\\
b_1,b_2,\ldots,b_{s}
\end{array};q,\, z
\right] =\sum_{k=0}^{\infty}\frac{(a_1,a_2,\ldots, a_{r};q)_k}
{(q,b_1,b_2,\ldots,b_{s};q)_k}\bigg\{(-1)^kq^{\binom{k}{2}}\bigg\}^{1+s-r}z^k.
$$
Then Watson's $_8\phi_7$ transformation (cf. \cite[Appendix
(III.18)]{Gasper}) can be expressed as
\begin{align}
& _{8}\phi_{7}\!\left[\begin{array}{cccccccc}
a,& qa^{\frac{1}{2}},& -qa^{\frac{1}{2}}, & b,    & c,    & d,    & e,    & q^{-m} \\
  & a^{\frac{1}{2}}, & -a^{\frac{1}{2}},  & aq/b, & aq/c, & aq/d, & aq/e, & aq^{m+1}
\end{array};q,\, \frac{a^2q^{m+2}}{bcde}
\right] \notag\\[5pt]
&\quad =\frac{(aq, aq/de;q)_{m}} {(aq/d, aq/e;q)_{m}}
\,{}_{4}\phi_{3}\!\left[\begin{array}{c}
aq/bc,\ d,\ e,\ q^{-m} \\
aq/b,\, aq/c,\, deq^{-m}/a
\end{array};q,\, q
\right]. \label{eq:watson}
\end{align}
Performing the replacements $q\to q^2,a\to q, b\to q^2, c\to q/b,
d\to bq, e\to q^{1+n},m\to (n-1)/2$ in \eqref{eq:watson}, we arrive
at
\begin{equation*}
\sum_{k=0}^{(n-1)/2}\tilde{\beta}_q(k)=[n](bq)^{(1-n)/2}\frac{(bq^2;q^2)_{(n-1)/2}}{(q^2/b;q^2)_{(n-1)/2}}
\sum_{k=0}^{(n-1)/2}\frac{(b,bq,q^{1+n},q^{1-n};q^2)_k}{(q,q^2,bq^2,bq^2;q^2)_k}q^{2k},
\end{equation*}
where $\tilde{\beta}_q(k)$ is the $a=q^n$ or $a=q^{-n}$ case of
$\beta_q(k)$ and satisfies $\tilde{\beta}_q(k)=0$ \:for\:
$(n+1)/2\leq k\leq n-1$. In the light of  the $d=m=2$ case of Lemma
\ref{lemma-b} again, we are led to
\begin{align*}
\sum_{i+j\leq n-1}\tilde{\beta}_q(i)\tilde{\beta}_q(j)
=[n]^2(bq)^{1-n}\frac{(bq^2;q^2)_{(n-1)/2}^2}{(q^2/b;q^2)_{(n-1)/2}^2}
\bigg\{\sum_{k=0}^{(n-1)/2}\frac{(b,bq,q^{1+n},q^{1-n};q^2)_k}{(q,q^2,bq^2,bq^2;q^2)_k}q^{2k}\bigg\}^2.
\end{align*}
Hence we find the $q$-supercongruence: modulo $(1-aq^n)(a-q^n)$,
\begin{align}
\sum_{i+j\leq n-1}\beta_q(i)\beta_q(j)
&\equiv[n]^2(bq)^{1-n}\frac{(bq^2;q^2)_{(n-1)/2}^2}{(q^2/b;q^2)_{(n-1)/2}^2}
\notag\\[1mm]
&\quad\times\bigg\{\sum_{k=0}^{(n-1)/2}\frac{(aq,q/a,b,bq;q^2)_k}{(q,q^2,bq^2,bq^2;q^2)_k}q^{2k}\bigg\}^2.
\label{eq:wei-m}
\end{align}
Interchanging the parameters $a$ and $b$ in \eqref{eq:wei-m}, there
holds the following $q$-supercongruence: modulo $(1-bq^n)(b-q^n)$,
\begin{align}
\sum_{i+j\leq n-1}\beta_q(i)\beta_q(j)
&\equiv[n]^2(aq)^{1-n}\frac{(aq^2;q^2)_{(n-1)/2}^2}{(q^2/a;q^2)_{(n-1)/2}^2}
\notag\\[1mm]
&\quad\times\bigg\{\sum_{k=0}^{(n-1)/2}\frac{(bq,q/b,a,aq;q^2)_k}{(q,q^2,aq^2,aq^2;q^2)_k}q^{2k}\bigg\}^2.
\label{eq:wei-n}
\end{align}

It is routine to see that the polynomials $(1-aq^n)(a-q^n)$,
$(1-bq^n)(b-q^n)$, and $[n]$ are relatively prime to one another.
 Paying attention to the relations
\begin{align*}
&\frac{(1-bq^n)(b-q^n)(-1-a^2+aq^n)}{(a-b)(1-ab)}\equiv1\pmod{(1-aq^n)(a-q^n)},
\\[1mm]
&\frac{(1-aq^n)(a-q^n)(-1-b^2+bq^n)}{(b-a)(1-ba)}\equiv1\pmod{(1-bq^n)(b-q^n)},
\end{align*}
and engaging the Chinese remainder theorem for coprime polynomials,
from \eqref{eq:wei-j}, \eqref{eq:wei-m}, and \eqref{eq:wei-n} we
deduce the following $q$-supercongruence: modulo
$[n](1-aq^n)(a-q^n)(1-bq^n)(b-q^n)$,
\begin{align}
\sum_{i+j\leq n-1}\beta_q(i)\beta_q(j) &\equiv
[n]^2(bq)^{1-n}\frac{(1-bq^n)(b-q^n)(-1-a^2+aq^n)}{(a-b)(1-ab)}
\notag\\[1mm]
&\quad\times\frac{(bq^2;q^2)_{(n-1)/2}^2}{(q^2/b;q^2)_{(n-1)/2}^2}
\bigg\{\sum_{k=0}^{(n-1)/2}\frac{(aq,q/a,b,bq;q^2)_k}{(q,q^2,bq^2,bq^2;q^2)_k}q^{2k}\bigg\}^2
\notag\\[1mm]
&\:+ [n]^2(aq)^{1-n}\frac{(1-aq^n)(a-q^n)(-1-b^2+bq^n)}{(b-a)(1-ba)}
\notag\\[1mm]
&\quad\times\frac{(aq^2;q^2)_{(n-1)/2}^2}{(q^2/a;q^2)_{(n-1)/2}^2}
\bigg\{\sum_{k=0}^{(n-1)/2}\frac{(bq,q/b,a,aq;q^2)_k}{(q,q^2,aq^2,aq^2;q^2)_k}q^{2k}\bigg\}^2.
\label{eq:wei-o}
\end{align}

The $b=1$ case of \eqref{eq:wei-o} can be written as the result:
modulo $[n]\Phi_n(q)^2(1-aq^n)(a-q^n)$,
\begin{align}
\sum_{i+j\leq n-1}\lambda_q(i)\lambda_q(j) &\equiv
[n]^2q^{1-n}\frac{(1-q^n)^2(1+a^2-aq^n)}{(1-a)^2}
\notag\\[1mm]
&\:- [n]^2(aq)^{1-n}\frac{(1-aq^n)(a-q^n)(2-q^n)}{(1-a)^2}
\notag\\[1mm]
&\quad\times\frac{(aq^2;q^2)_{(n-1)/2}^2}{(q^2/a;q^2)_{(n-1)/2}^2}
\bigg\{\sum_{k=0}^{(n-1)/2}\frac{(q,a,aq;q^2)_k}{(q^2,aq^2,aq^2;q^2)_k}q^{2k}\bigg\}^2.
\label{eq:wei-p}
\end{align}
Recall $q$-Saalsch\"{u}tz identity (cf. \cite[Appendix
(II.12)]{Gasper}):
\begin{align}
& _{3}\phi_{2}\!\left[\begin{array}{cccccccc}
 q^{-m}, &a, &b\\
  &c,  &q^{1-m}ab/c
\end{array};q,\, q \right]
=\frac{(c/a, c/b;q)_{m}}
 {(c, c/ab;q)_{m}}. \label{saal}
\end{align}
Employing the replacements $q\to q^2,b\to aq, c\to aq^2, m\to
(n-1)/2$ in \eqref{saal}, we catch hold of
\begin{align*}
\sum_{k=0}^{(n-1)/2}\frac{(a,aq,q^{1-n};q^2)_k}{(q^2,aq^2,aq^{2-n};q^2)_k}q^{2k}
=\frac{(q,q^2;q^2)_{(n-1)/2}}
 {(aq^2, q/a;q^2)_{(n-1)/2}}.
\end{align*}
The last equation may engenders
\begin{align}
\sum_{k=0}^{(n-1)/2}\frac{(q,a,aq;q^2)_k}{(q^2,aq^2,aq^{2};q^2)_k}q^{2k}
\equiv\frac{(q,q^2;q^2)_{(n-1)/2}}
 {(aq^2, q/a;q^2)_{(n-1)/2}}\pmod{\Phi_n(q)}.
\label{eq:wei-q}
\end{align}
Substitute \eqref{eq:wei-q} into \eqref{eq:wei-p} to demonstrate the
following conclusion: modulo $[n]\Phi_n(q)^2(1-aq^n)(a-q^n)$,
\begin{align}
\sum_{i+j\leq n-1}\lambda_q(i)\lambda_q(j) &\equiv
[n]^2q^{1-n}\frac{(1-q^n)^2(1+a^2-aq^n)}{(1-a)^2}
\notag\\[1mm]
&- [n]^2(aq)^{1-n}\frac{(1-aq^n)(a-q^n)(2-q^n)}{(1-a)^2}
\frac{(q,q^2;q^2)_{(n-1)/2}^2}{(q/a,q^2/a;q^2)_{(n-1)/2}^2}
\notag\\[1mm]
&\equiv [n]^2q^{1-n}\frac{(1-q^n)^2(1+a^2-aq^n)}{(1-a)^2}
\notag\\[1mm]
&- [n]^2q^{1-n}\frac{(1-aq^n)(a-q^n)(2-q^n)}{(1-a)^2}
\frac{(q^2;q^2)_{(n-1)/2}^4}{(aq^2,q^2/a;q^2)_{(n-1)/2}^2}.
\label{eq:wei-r}
\end{align}
In view of the identity
\begin{align}
(1-q^n)^2(1+a^2-aq^n)=(1-a)^2+(1-aq^n)(a-q^n)(2-q^n),
\label{relation}
\end{align}
\eqref{eq:wei-r} can be rewritten as follows: modulo
$[n]\Phi_n(q)^2(1-aq^n)(a-q^n)$,
\begin{align*}
\sum_{i+j\leq n-1}\lambda_q(i)\lambda_q(j) &\equiv
[n]^2q^{1-n}+[n]^2q^{1-n}\frac{(1-aq^n)(a-q^n)(2-q^n)}{(1-a)^2}
\notag\\[1mm]
&\quad\times\bigg\{1-\frac{(q^2;q^2)_{(n-1)/2}^4}{(aq^2,q^2/a;q^2)_{(n-1)/2}^2}\bigg\}
\\[1mm]
&\equiv [n]^2q^{1-n}+[n]^2q^{1-n}\frac{(1-aq^n)(a-q^n)}{(1-a)^2}
\notag\\[1mm]
&\quad\times\bigg\{1-\frac{(q^2;q^2)_{(n-1)/2}^4}{(aq^2,q^2/a;q^2)_{(n-1)/2}^2}\bigg\},
\end{align*}
where we have utilized the relation $q^n\equiv1\pmod{\Phi_n(q)}$ in
the last step. This completes the proof of Theorem \ref{thm-d}.
\end{proof}

Subsequently, we begin to prove Theorem \ref{thm-a}.

\begin{proof}[Proof of Theorem \ref{thm-a}]
By the aid of L'H\^{o}pital rule, we derive
\begin{align*}
\lim_{a\to1}\frac{1}{(1-a)^2}\bigg\{1-\frac{(q^2;q^2)_{(n-1)/2}^4}{(aq^2,q^2/a;q^2)_{(n-1)/2}^2}\bigg\}
=-2\sum_{t=1}^{(n-1)/2}\frac{q^{2t}}{(1-q^{2t})^2}.
\end{align*}
Letting $a\to1$ in \eqref{eq:wei-d} and employing this limit, we
prove \eqref{eq:wei-a}.
\end{proof}

\section{Proof of Theorem \ref{thm-b}}

For the aim to prove Theorem \ref{thm-b}, we demand Lemma
\ref{lemma-b} and the following two Lemmas.

\begin{lemma}\label{lemma-c}
Let $n$ be a positive odd integer. Then
\begin{align*}
\sum_{k=0}^{(n-1)/2}[4k+1]\frac{(aq,q/a,bq,q/b,q/c,q;q^2)_k}{(q^2/a,aq^2,q^2/b,bq^2,cq^2,q^2;q^2)_k}(cq)^k
\equiv 0\pmod{[n]}.
\end{align*}
\end{lemma}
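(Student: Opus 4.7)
The plan is to prove Lemma~\ref{lemma-c} by showing that the sum on the left-hand side vanishes at every $n$-th root of unity $\zeta \neq 1$. Since
\[
[n]=\prod_{\substack{u\mid n\\ u>1}}\Phi_u(q),
\]
and each $\Phi_u(q)$ is coprime to the denominators appearing in the summand, such vanishing is equivalent to the claimed congruence. Fix a divisor $u>1$ of $n$; since $n$ is odd, so is $u$. Let $\zeta$ be any primitive $u$-th root of unity, and write
\[
\alpha_q(k)=[4k+1]\frac{(aq,q/a,bq,q/b,q/c,q;q^2)_k}{(q^2/a,aq^2,q^2/b,bq^2,cq^2,q^2;q^2)_k}(cq)^k.
\]

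First, I would establish the pairing identity $\alpha_q((u-1)/2-k)\equiv -\alpha_q(k)\pmod{\Phi_u(q)}$ for $0\le k\le (u-1)/2$. Observe that $\alpha_q(k)$ has exactly six numerator/denominator pairs of the form $(xq;q^2)_k/(q^2/x;q^2)_k$, corresponding to $x\in\{a,\,a^{-1},\,b,\,b^{-1},\,c^{-1},\,1\}$. Applying the Guo--Schlosser $q$-congruence \eqref{eq:wei-e} (with $n$ replaced by $u$) to each such pair, multiplying the six resulting congruences, and combining with the transformation
\[
[4((u-1)/2-k)+1]=[2u-4k-1]\equiv -q^{-4k-1}[4k+1]\pmod{\Phi_u(q)}
\]
and the power $(cq)^{(u-1)/2-k}$, a direct bookkeeping check yields the pairing. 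Crucially, the product $\prod x=1/c$ over the six pairs combines with $(cq)^{(u-1)/2-k}$ to recover the factor $c^k$, and the cumulative exponent of $q$ reduces to $-1+3(u-1)^2/2+(u-1)/2=(u-1)(3u-2)/2-1$, which is $\equiv 0\pmod{u}$. Summing the pairing identity over $k$ then gives $\sum_{k=0}^{(u-1)/2}\alpha_q(k)\equiv 0\pmod{\Phi_u(q)}$.

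Second, I would promote this to vanishing of the full sum $\sum_{k=0}^{(n-1)/2}\alpha_\zeta(k)$. Two ingredients are needed: (a) the factor $(q;q^2)_k$ acquires $1-\zeta^u=0$ once $k\ge (u+1)/2$, so $\alpha_\zeta(j)=0$ for $(u+1)/2\le j\le u-1$; and (b) since $\zeta^{2lu}=1$, each $q$-shifted factorial in $\alpha_\zeta(lu+j)$ factors as a product over $lu$ and $j$, with any $0/0$ indeterminacies cancelling mode-by-mode between the matched numerator/denominator pairs, yielding the multiplicative relation
\[
\alpha_\zeta(lu+j)=\alpha_\zeta(lu)\,\alpha_\zeta(j),
\]
which is the single-sum analogue of the identity in Lemma~\ref{lemma-b}. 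Writing $n=uN$ with $N$ odd, one decomposes the range $0\le k\le (n-1)/2$ as $k=lu+j$ with $0\le j\le u-1$, the last value $l=(N-1)/2$ contributing only $0\le j\le (u-1)/2$. Using (a) to replace each full inner $j$-sum by $\sum_{j=0}^{(u-1)/2}\alpha_\zeta(j)$, the whole sum factors as
\[
\sum_{k=0}^{(n-1)/2}\alpha_\zeta(k)=\Bigg(\sum_{l=0}^{(N-1)/2}\alpha_\zeta(lu)\Bigg)\sum_{j=0}^{(u-1)/2}\alpha_\zeta(j)=0,
\]
the second factor vanishing by the previous step. Since this holds for every primitive $u$-th root $\zeta$ and every $u\mid n$ with $u>1$, the claimed divisibility by $[n]$ follows.

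The main obstacle is the bookkeeping in the pairing step: one must keep track of the product of six separate Guo--Schlosser substitutions (with six different values of $x$), verify that the accumulated powers of $-x$ cancel correctly against $(cq)^{(u-1)/2-k}$, and confirm that the resulting exponent of $q$ is divisible by $u$. The extension step is essentially a single-sum instance of the standard periodicity-at-roots-of-unity argument already used in the proof of Theorem~\ref{thm-d}, so it presents no serious difficulty once (a) and (b) are in hand.
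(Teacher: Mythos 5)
Your argument is correct, but it is not the paper's argument: the paper proves Lemma~\ref{lemma-c} in one line, by quoting Ni and Wang's congruence \eqref{eq:NW-a} and specializing $d=2$, $r=1$, $\mu=(n-1)/2$. What you have done instead is give a self-contained proof of that special case, using exactly the toolkit the paper deploys elsewhere: the Guo--Schlosser reversal \eqref{eq:wei-e} applied to the six pairs $x\in\{a,a^{-1},b,b^{-1},c^{-1},1\}$ to get the antisymmetry $\alpha_q((u-1)/2-k)\equiv-\alpha_q(k)\pmod{\Phi_u(q)}$ (your bookkeeping checks out: the signs cancel in sixes, $\prod x=1/c$ absorbs the stray powers of $c$, and the residual $q$-exponent $u(3u-5)/2$ is a multiple of $u$), followed by the truncation-and-periodicity argument at roots of unity to pass from $\Phi_u(q)$ to $[n]$. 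This second step is the single-sum analogue of the manipulation the paper performs for \eqref{eq:wei-i}--\eqref{eq:wei-j} in the proof of Theorem~\ref{thm-d}, and your treatment of the $0/0$ cancellations in $\alpha_\zeta(lu+j)=\alpha_\zeta(lu)\alpha_\zeta(j)$ is at the same level of rigor as the paper's. The trade-off: the citation route is shorter and defers the work to \cite{NW}, while your route makes Section~3 self-contained and makes visible why the lemma holds (a reversal symmetry of the very kind already used for Lemma~\ref{lemma-a}); on the other hand \eqref{eq:NW-a} is more general (arbitrary $d$, $r$, $\mu$), and the paper reuses it again for Lemma~\ref{lemma-f}, where your pairing would have to be redone with $d$-division points rather than midpoints.
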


\begin{proof}
Ni and Wang \cite[Lemma 2.2]{NW}) shows that
\begin{align}
&\sum_{k=0}^{\mu}[2dk+r]\frac{(aq^r,q^r/a,bq^r,q^r/b,q^r/c,q^r;q^d)_k}{(q^d/a,aq^d,q^d/b,bq^d,cq^d,q^d;q^d)_k}(cq^{2d-3r})^k
\equiv 0\pmod{[n]}, \label{eq:NW-a}
\end{align}
where $d,n$ are positive integers and $r$ is an integer with $0\leq
\mu\leq n-1$, $\gcd(n,d)=1$, and $d\mu\equiv -r\pmod{n}$. Choosing
$d=2$, $\mu=(n-1)/2$, and $r=1$ in \eqref{eq:NW-a}, we obtain the
desired formula.
\end{proof}

\begin{lemma}\label{lemma-d}
For three polynomials $(1-aq^n)(a-q^n)$, $(1-bq^n)(b-q^n)$, and
$(c-q^n)$, there are the following relations:
\begin{align*}
&\frac{(1-bq^n)(b-q^n)(c-q^n)\{-1-a^2-a^4+ac+a^3c+a(1+a^2-ac)q^n\}}{(1-ba)(b-a)(1-ac)(a-c)}
\\[1mm]
&\:\:\equiv1\pmod{(1-aq^n)(a-q^n)},
\\[1mm]
&\frac{(1-aq^n)(a-q^n)(c-q^n)\{-1-b^2-b^4+bc+b^3c+b(1+b^2-bc)q^n\}}{(1-ab)(a-b)(1-bc)(b-c)}
\\[1mm]
&\:\:\equiv1\pmod{(1-bq^n)(b-q^n)},
\\[1mm]
&\qquad\quad\frac{(1-aq^n)(a-q^n)(1-bq^n)(b-q^n)}{(1-ac)(a-c)(1-bc)(b-c)}\equiv1\pmod{(c-q^n)}.
\end{align*}
\end{lemma}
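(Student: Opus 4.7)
The plan is to verify each of the three congruences by reducing them to polynomial identities in the variable $q^n$. For each modulus, the roots (in $q^n$) are simple and explicit, so by the factor theorem it is enough to check the alleged identity at each root.

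The third congruence is immediate: modulo $(c-q^n)$ we may substitute $q^n = c$ on the left, which turns $(1-aq^n)(a-q^n)$ into $(1-ac)(a-c)$ and $(1-bq^n)(b-q^n)$ into $(1-bc)(b-c)$, so the ratio equals $1$. For the first congruence the modulus $(1-aq^n)(a-q^n)$ has roots $q^n=1/a$ and $q^n=a$ (viewed in the rational function field in $a,b,c$), so it suffices to evaluate the numerator
$$N(q^n)=(1-bq^n)(b-q^n)(c-q^n)\bigl\{-1-a^2-a^4+ac+a^3c+a(1+a^2-ac)q^n\bigr\}$$
at $q^n=1/a$ and $q^n=a$ and show each value equals the constant $(1-ba)(b-a)(1-ac)(a-c)$. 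At $q^n=1/a$ the bracketed factor collapses to $-a^4+a^3c=-a^3(a-c)$, while $(1-b/a)(b-1/a)(c-1/a)=(a-b)(ab-1)(ac-1)/a^3$, producing $N(1/a)=(1-ab)(b-a)(1-ac)(a-c)$. At $q^n=a$ the bracketed factor collapses to $-(1-ac)$, and combined with $(1-ab)(b-a)(c-a)=-(1-ab)(b-a)(a-c)$ this gives $N(a)=(1-ab)(b-a)(a-c)(1-ac)$. Both evaluations match the denominator, so the first congruence holds. The second is obtained verbatim by swapping the roles of $a$ and $b$.

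There is essentially no conceptual obstacle; the only thing to watch is the algebraic bookkeeping inside the bracketed degree-one polynomial, which has been chosen exactly to annihilate the unwanted terms at both evaluation points. One should also note that the denominators $(1-ab),(a-b),(1-ac),(a-c),(1-bc),(b-c)$ are nonzero in the ring of rational functions in $a,b,c$ used to carry out the Chinese remainder argument, so the fractions are well-defined and the substitution step is legitimate.
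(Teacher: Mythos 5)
Your verification is correct: the paper in fact states Lemma 4.2 without any proof, treating it as a routine check, so your argument supplies exactly the omitted verification. Reducing each congruence to a polynomial identity in the indeterminate $x=q^n$ and evaluating at the simple roots $x=1/a$, $x=a$ (resp.\ $x=c$) is the natural way to do this, and your evaluations of the bracketed linear factor, namely $-a^{3}(a-c)$ at $x=1/a$ and $-(1-ac)$ at $x=a$, are both accurate, as is the sign bookkeeping that matches each value to $(1-ab)(b-a)(1-ac)(a-c)$. Your closing remark that the roots are distinct and the denominators are nonzero in the rational function field is the right thing to note for the factor-theorem step to be legitimate.
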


At the moment, we shall present the following parametric
generalization of Theorem \ref{thm-b}.

\begin{theorem}\label{thm-e}
Let $n$ be a positive odd integer and
$$\lambda_q(k)=[4k+1]\frac{(q;q^2)_k^4(aq,q/a;q^2)_k}{(q^2;q^2)_k^4(q^2/a,aq^2;q^2)_k}q^{k}\quad\text{with}\quad k\geq
  0.$$
 Then, modulo
$[n]\Phi_n(q)^3(1-aq^n)(a-q^n)$,
\begin{align}
&\sum_{i+j\leq n-1}\lambda_q(i)\lambda_q(j)
\notag\\
&\quad\equiv[n]^2q^{1-n}\frac{(1-q^n)^2(1+a^2-aq^n)}{(1-a)^2}\bigg\{\sum_{k=0}^{(n-1)/2}\frac{(aq,q/a;q^2)_k(q;q^2)_k^2}{(q^2;q^2)_k^4}q^{2k}\bigg\}^2
\notag\\[1mm]
&\quad\,-[n]^2q^{1-n}\frac{(2-q^n)(1-aq^n)(a-q^n)}{(1-a)^2}\bigg\{\sum_{k=0}^{(n-1)/2}\frac{(q;q^2)_k^4}{(aq^2,q^2/a;q^2)_k(q^2;q^2)_k^2}q^{2k}\bigg\}^2.
\label{eq:wei-aa}
\end{align}
\end{theorem}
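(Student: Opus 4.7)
The plan is to extend the proof strategy of Theorem~\ref{thm-d} by introducing one further parameter. I would work with the three-parameter deformation
$$\beta_q(k)=[4k+1]\frac{(aq,q/a,bq,q/b,q/c,q;q^2)_k}{(q^2/a,aq^2,q^2/b,bq^2,cq^2,q^2;q^2)_k}(cq)^k,$$
which recovers $\lambda_q(k)$ upon setting $b=c=1$. The first step is to establish $\sum_{i+j\le n-1}\beta_q(i)\beta_q(j)\equiv 0\pmod{[n]}$. Applying the Guo--Schlosser congruence \eqref{eq:wei-e} to each of the three pairs $(aq,q/a)$, $(bq,q/b)$, $(q/c,q)$, and tracking the sign flip of $[4k+1]$ together with the contribution of $(cq)^k$, yields the antisymmetry $\beta_q((n-1)/2-k)\equiv -\beta_q(k)\pmod{\Phi_n(q)}$, and via \eqref{eq:wei-g} the analogue $\beta_q((3n-1)/2-k)\equiv -\beta_q(k)\pmod{\Phi_n(q)}$ for $(n+1)/2\le k\le n-1$. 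Lemma~\ref{lemma-a} then forces the double sum to vanish modulo $\Phi_n(q)$, and the root-of-unity extension via the $d=m=2$ case of Lemma~\ref{lemma-b} (exactly as in the proof of Theorem~\ref{thm-d}) upgrades this to modulo $[n]$.

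Next I would produce three closed-form evaluations of the double sum, one for each specialization making the single sum $\sum_{k=0}^{(n-1)/2}\beta_q(k)$ terminate at $(n-1)/2$: (i)~$a=q^{\pm n}$, which kills $(aq,q/a;q^2)_{(n+1)/2}$; (ii)~$b=q^{\pm n}$; and (iii)~$c=q^n$, which kills $(q^{1-n};q^2)_{(n+1)/2}$. In each case, after relabeling the free parameters, Watson's $_8\phi_7$ transformation \eqref{eq:watson} reduces the single sum to a $_4\phi_3$, and squaring through the $d=m=2$ case of Lemma~\ref{lemma-b} yields the double sum modulo $(1-aq^n)(a-q^n)$, $(1-bq^n)(b-q^n)$, and $(c-q^n)$ respectively.

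Since the three polynomials $(1-aq^n)(a-q^n)$, $(1-bq^n)(b-q^n)$, $(c-q^n)$ are pairwise coprime and coprime to $[n]$, Lemma~\ref{lemma-d} supplies the explicit inverses for the Chinese remainder theorem, producing a single formula for $\sum\beta_q(i)\beta_q(j)$ modulo $[n](1-aq^n)(a-q^n)(1-bq^n)(b-q^n)(c-q^n)$. Setting $b=c=1$ converts $(1-bq^n)(b-q^n)(c-q^n)$ into $(1-q^n)^3$, which contributes exactly $\Phi_n(q)^3$, so that the modulus refines to $[n]\Phi_n(q)^3(1-aq^n)(a-q^n)$ as required. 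The three $_4\phi_3$-sums coming out of (i)--(iii) are then simplified by the $q$-Saalsch\"utz identity \eqref{saal} (in the same spirit as the $b=1$ step of Theorem~\ref{thm-d}) and reorganized through the structural identity \eqref{relation}, after which the two squared sums appearing on the right of \eqref{eq:wei-aa} emerge with the correct coefficients.

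The main obstacle is specialization~(iii): although $(q^{1-n};q^2)_k$ truncates the series for the usual reason, matching $\beta_q(k)|_{c=q^n}$ to the very-well-poised form demanded by \eqref{eq:watson} forces $q^{1-n}=q^{-m}$ with $m=(n-1)/2$ and assigns $aq,q/a,bq,q/b$ to the remaining Watson parameters asymmetrically; checking that the resulting $_4\phi_3$ agrees (after use of \eqref{saal}) with the relevant target sum in \eqref{eq:wei-aa} is where the hypergeometric bookkeeping concentrates. The final CRT combination and the $b=c=1$ limit are routine but intricate, and one last application of \eqref{relation} is needed to convert the coefficient $(1-q^n)^2(1+a^2-aq^n)/(1-a)^2$ into the form displayed in Theorem~\ref{thm-e}.
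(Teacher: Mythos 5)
Your architecture coincides with the paper's: the same three-parameter deformation $\beta_q(k)$, the same three Watson specializations at $a=q^{\pm n}$, $b=q^{\pm n}$, $c=q^{n}$, the CRT combination via Lemma~\ref{lemma-d}, and the final specialization $b=c=1$. One harmless deviation: for the congruence modulo $[n]$ the paper does not argue via antisymmetry and Lemma~\ref{lemma-a}; since $(q;q^2)_k$ sits in the numerator of $\beta_q(k)$, at a primitive $u$-th root of unity one has $\beta_\zeta(k)=0$ for $(u+1)/2\le k\le u-1$, so Lemma~\ref{lemma-b} collapses the double sum to $\bigl\{\sum_{k\le (u-1)/2}\beta_\zeta(k)\bigr\}^2$, which vanishes by the Ni--Wang congruence (Lemma~\ref{lemma-c}). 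Your antisymmetry route also goes through (the $c$-powers from $(q/c;q^2)_k/(cq^2;q^2)_k$ and $(cq)^k$ cancel, and the residual exponent $n(3n-5)/2$ is a multiple of $n$), so either way this step is fine.

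The genuine gap is the finishing step. You propose to simplify the surviving $_4\phi_3$-sums with the $q$-Saalsch\"utz identity \eqref{saal}, ``in the same spirit as the $b=1$ step of Theorem~\ref{thm-d}.'' That worked in Theorem~\ref{thm-d} only because setting $b=1$ there degenerated the relevant series to a terminating balanced $_3\phi_2$ admitting a closed form. Here nothing degenerates: the two sums on the right of \eqref{eq:wei-aa} are honest $_4\phi_3$'s that survive into the final answer and are not Saalsch\"utz-summable. The step that is actually needed is to match the $b=q^{\pm n}$ branch, which after $b,c\to 1$ produces $\bigl\{\sum_{k}(q/a,q;q^2)_k^2\,q^{2k}/(q^2/a,q^2;q^2)_k^2\bigr\}^2$, with the second sum displayed in \eqref{eq:wei-aa}. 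This requires Sears' $_4\phi_3$ transformation \eqref{eq:Sear} together with $(q^{1+n},q^{1-n};q^2)_k\equiv(q;q^2)_k^2\pmod{\Phi_n(q)^2}$, which give
\begin{align*}
\sum_{k=0}^{(n-1)/2}\frac{(q/a,q;q^2)_k^2}{(q^2/a,q^2;q^2)_k^2}q^{2k}
\equiv a^{(1-n)/2}\frac{(aq^2;q^2)_{(n-1)/2}}{(q^2/a;q^2)_{(n-1)/2}}
\sum_{k=0}^{(n-1)/2}\frac{(q;q^2)_k^4}{(aq^2,q^2/a;q^2)_k(q^2;q^2)_k^2}q^{2k}
\pmod{\Phi_n(q)^2};
\end{align*}
the prefactor, once squared, exactly absorbs the ratio $(q^2/a;q^2)_{(n-1)/2}^2/(aq^2;q^2)_{(n-1)/2}^2$ and the power $(a/q)^{n-1}$ left over from Watson, producing the clean coefficient $[n]^2q^{1-n}$. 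A $_3\phi_2$ summation cannot deliver this $_4\phi_3$-to-$_4\phi_3$ conversion, so as written your argument cannot bring the two CRT branches to the form \eqref{eq:wei-aa}; you need Sears (or an equivalent $_4\phi_3$ transformation) in place of Saalsch\"utz.
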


\begin{proof}
It is obvious that the $n=1$ case of \eqref{eq:wei-aa} is right. We
now posit that $n>1$.  Set
$$\beta_q(k)=[4k+1]\frac{(aq,q/a,bq,q/b,q/c,q;q^2)_k}{(q^2/a,aq^2,q^2/b,bq^2,cq^2,q^2;q^2)_k}(cq)^k.$$
Let $\zeta\neq 1$ be an $n$-th root of unity. That is to say,
$\zeta$ is a primitive $u$-th root of unity with $u\mid n$.
Considering that $(q;q^2)_k$ appears in the numerator of
$\beta_q(k)$, so $\beta_\zeta(k)=0$ for $(u+1)/2\leqslant k\leqslant
u-1$. In terms of the $d=m=2$ case of Lemma \ref{lemma-b} and the
$n=u$ case of Lemma \ref{lemma-c}, we have
$$
\sum_{i+j\leq
u-1}\beta_\zeta(i)\beta_\zeta(j)=\bigg\{\sum_{k=0}^{(u-1)/2}\beta_\zeta(k)\bigg\}^2=0.
$$
For any integers $k$ and $l$ with $0\leqslant k\leqslant u-1$, it is
not difficult to verify that
$$\frac{\beta_\zeta(lu+k)}{\beta_\zeta(lu)}=\beta_\zeta(k).$$
By means of the $d=m=2$ case of Lemma \ref{lemma-b} again, we get
\begin{align*}
\sum_{i+j\leq n-1}\beta_\zeta(i)\beta_\zeta(j)
&=\sum_{t=0}^{n-1}\sum_{j=0}^{t}\beta_\zeta(j)\beta_\zeta(t-j)\\
&=\sum_{l=0}^{n/u-1}\sum_{k=0}^{u-1}\sum_{j=0}^{lu+k}\beta_\zeta(j)\beta_\zeta(lu+k-j)
\\[1mm]
&=\sum_{l=0}^{n/u-1}\sum_{k=0}^{u-1}\sum_{i=0}^{l}\beta_\zeta(iu)\beta_\zeta((l-i)u)\sum_{j=0}^{k}\beta_\zeta(j)\beta_\zeta(k-j)
\\[1mm]
&=\sum_{l=0}^{n/u-1}\sum_{i=0}^{l}\beta_\zeta(iu)\beta_\zeta((l-i)u)\sum_{k=0}^{u-1}\sum_{j=0}^{k}\beta_\zeta(j)\beta_\zeta(k-j)
\\[1mm]
&=0.
\end{align*}
Since the upper equality is true for any $n$-th root of unit
$\zeta\neq 1$, we judge that
\begin{equation}\label{eq:wei-bb}
\sum_{i+j\leq n-1}\beta_q(i)\beta_q(j)\equiv 0\pmod{[n]}.
\end{equation}

Performing the replacements $q\to q^2,a\to q, b\to bq, c\to q/c,
d\to q/b, e\to q^{1+n},m\to (n-1)/2$ in \eqref{eq:watson}, there is
\begin{equation*}
\sum_{k=0}^{(n-1)/2}\tilde{\beta}_q(k)=[n](b/q)^{(n-1)/2}\frac{(q^2/b;q^2)_{(n-1)/2}}{(bq^2;q^2)_{(n-1)/2}}
\sum_{k=0}^{(n-1)/2}\frac{(cq/b,q/b,q^{1+n},q^{1-n};q^2)_k}{(q^2,q^2/b,q^2/b,cq^2;q^2)_k}q^{2k},
\end{equation*}
where $\tilde{\beta}_q(k)$ means the $a=q^n$ or $a=q^{-n}$ case of
$\beta_q(k)$ subject to $\tilde{\beta}_q(k)=0$ \:for\: $(n+1)/2\leq
k\leq n-1$. Via the $d=m=2$ case of Lemma \ref{lemma-b}, it is
ordinary to understand that
\begin{align*}
\sum_{i+j\leq
n-1}\tilde{\beta}_q(i)\tilde{\beta}_q(j)&=[n]^2(b/q)^{n-1}\frac{(q^2/b;q^2)_{(n-1)/2}^2}{(bq^2;q^2)_{(n-1)/2}^2}
\notag\\
&\quad\times
\bigg\{\sum_{k=0}^{(n-1)/2}\frac{(cq/b,q/b,q^{1+n},q^{1-n};q^2)_k}{(q^2,q^2/b,q^2/b,cq^2;q^2)_k}q^{2k}\bigg\}^2.
\end{align*}
 Then there holds the $q$-supercongruence: modulo $(1-aq^n)(a-q^n)$,
\begin{align}
\sum_{i+j\leq
n-1}{\beta}_q(i){\beta}_q(j)&\equiv[n]^2(b/q)^{n-1}\frac{(q^2/b;q^2)_{(n-1)/2}^2}{(bq^2;q^2)_{(n-1)/2}^2}
\notag\\
&\quad\times\bigg\{\sum_{k=0}^{(n-1)/2}\frac{(aq,q/a,q/b,cq/b;q^2)_k}{(q^2,q^2/b,q^2/b,cq^2;q^2)_k}q^{2k}\bigg\}^2.
\label{eq:wei-cc}
\end{align}
Interchanging the parameters $a$ and $b$ in \eqref{eq:wei-cc}, we
discover the dual form: modulo $(1-bq^n)(b-q^n)$,
\begin{align}
\sum_{i+j\leq
n-1}{\beta}_q(i){\beta}_q(j)&\equiv[n]^2(a/q)^{n-1}\frac{(q^2/a;q^2)_{(n-1)/2}^2}{(aq^2;q^2)_{(n-1)/2}^2}
\notag\\
&\quad\times\bigg\{\sum_{k=0}^{(n-1)/2}\frac{(bq,q/b,q/a,cq/a;q^2)_k}{(q^2,q^2/a,q^2/a,cq^2;q^2)_k}q^{2k}\bigg\}^2.
\label{eq:wei-dd}
\end{align}

Employing the replacements $q\to q^2,a\to q, b\to bq, c\to q/b, d\to
aq, e\to q/a,m\to (n-1)/2$ in \eqref{eq:watson}, it is easy to
realize that
\begin{equation*}
\sum_{k=0}^{(n-1)/2}\hat{\beta}_q(k)=[n]\frac{(q;q^2)_{(n-1)/2}^2}{(aq^2,q^2/a;q^2)_{(n-1)/2}}
\sum_{k=0}^{(n-1)/2}\frac{(q,aq,q/a,q^{1-n};q^2)_k}{(q^2,bq^2,q^2/b,q^{2-n};q^2)_k}q^{2k},
\end{equation*}
where $\hat{\beta}_q(k)$ indicates the $c=q^n$ case of $\beta_q(k)$
and meets $\hat{\beta}_q(k)=0$ \:for\: $(n+1)/2\leq k\leq n-1$.
Therefore, the $d=m=2$ case of Lemma \ref{lemma-b} gives
\begin{align*}
\sum_{i+j\leq
n-1}\hat{\beta}_q(i)\hat{\beta}_q(j)&=[n]^2\frac{(q;q^2)_{(n-1)/2}^4}{(aq^2,q^2/a;q^2)_{(n-1)/2}^2}
\notag\\
&\quad\times
\bigg\{\sum_{k=0}^{(n-1)/2}\frac{(q,aq,q/a,q^{1-n};q^2)_k}{(q^2,bq^2,q^2/b,q^{2-n};q^2)_k}q^{2k}\bigg\}^2.
\end{align*}
 Thus the last equation produces the $q$-congruence: modulo $(c-q^n)$,
\begin{align}
\sum_{i+j\leq
n-1}{\beta}_q(i){\beta}_q(j)&\equiv[n]^2\frac{(q;q^2)_{(n-1)/2}^4}{(aq^2,q^2/a;q^2)_{(n-1)/2}^2}
\notag\\
&\quad\times\bigg\{\sum_{k=0}^{(n-1)/2}\frac{(q,aq,q/a,q/c;q^2)_k}{(q^2,bq^2,q^2/b,q^{2}/c;q^2)_k}q^{2k}\bigg\}^2.
\label{eq:wei-ee}
\end{align}

It is routine to check that the polynomials $(1-aq^n)(a-q^n)$,
$(1-bq^n)(b-q^n)$, $(c-q^n)$, and $[n]$ are relatively prime to one
another. Engaging the Chinese remainder theorem for coprime
polynomials, from Lemma \ref{lemma-d} and
\eqref{eq:wei-bb}-\eqref{eq:wei-ee}, we deduce the following
$q$-supercongruence: modulo
$[n](1-aq^n)(a-q^n)(1-bq^n)(b-q^n)(c-q^n)$,
\begin{align}
&\sum_{i+j\leq n-1}{\beta}_q(i){\beta}_q(j)
\notag\\
&\quad\equiv\frac{(1-bq^n)(b-q^n)(c-q^n)\{-1-a^2-a^4+ac+a^3c+a(1+a^2-ac)q^n\}}{(1-ba)(b-a)(1-ac)(a-c)}
\notag\\[1mm]
&\qquad\times[n]^2(b/q)^{n-1}\frac{(q^2/b;q^2)_{(n-1)/2}^2}{(bq^2;q^2)_{(n-1)/2}^2}
\bigg\{\sum_{k=0}^{(n-1)/2}\frac{(aq,q/a,q/b,cq/b;q^2)_k}{(q^2,q^2/b,q^2/b,cq^2;q^2)_k}q^{2k}\bigg\}^2
\notag
\end{align}
\begin{align}
&\quad+\frac{(1-aq^n)(a-q^n)(c-q^n)\{-1-b^2-b^4+bc+b^3c+b(1+b^2-bc)q^n\}}{(1-ab)(a-b)(1-bc)(b-c)}
\notag\\[1mm]
&\qquad\times[n]^2(a/q)^{n-1}\frac{(q^2/a;q^2)_{(n-1)/2}^2}{(aq^2;q^2)_{(n-1)/2}^2}
\bigg\{\sum_{k=0}^{(n-1)/2}\frac{(bq,q/b,q/a,cq/a;q^2)_k}{(q^2,q^2/a,q^2/a,cq^2;q^2)_k}q^{2k}\bigg\}^2
\notag\\[1mm]
&\quad+\frac{(1-aq^n)(a-q^n)(1-bq^n)(b-q^n)}{(1-ac)(a-c)(1-bc)(b-c)}
\notag\\[1mm]
&\qquad\times[n]^2\frac{(q;q^2)_{(n-1)/2}^4}{(aq^2,q^2/a;q^2)_{(n-1)/2}^2}
\bigg\{\sum_{k=0}^{(n-1)/2}\frac{(q,aq,q/a,q/c;q^2)_k}{(q^2,bq^2,q^2/b,q^{2}/c;q^2)_k}q^{2k}\bigg\}^2.
\label{eq:wei-ff}
\end{align}
Fixing $c=1$ in \eqref{eq:wei-ff}, we gain the formula: modulo
$[n]\Phi_n(q)(1-aq^n)(a-q^n)(1-bq^n)(b-q^n)$,
\begin{align*}
&\sum_{i+j\leq n-1}\bar{\beta}_q(i)\bar{\beta}_q(j)
\notag\\
&\quad\equiv\frac{(1-bq^n)(b-q^n)(1+a^2-aq^n)}{(1-ba)(b-a)}
\notag\\[1mm]
&\qquad\times[n]^2(b/q)^{n-1}\frac{(q^2/b;q^2)_{(n-1)/2}^2}{(bq^2;q^2)_{(n-1)/2}^2}
\bigg\{\sum_{k=0}^{(n-1)/2}\frac{(aq,q/a,q/b,q/b;q^2)_k}{(q^2,q^2,q^2/b,q^2/b;q^2)_k}q^{2k}\bigg\}^2
\notag\\[1mm]
 &\quad+\frac{(1-aq^n)(a-q^n)(1+b^2-bq^n)}{(1-ab)(a-b)}
\notag\\[1mm]
&\qquad\times[n]^2(a/q)^{n-1}\frac{(q^2/a;q^2)_{(n-1)/2}^2}{(aq^2;q^2)_{(n-1)/2}^2}
\bigg\{\sum_{k=0}^{(n-1)/2}\frac{(bq,q/b,q/a,q/a;q^2)_k}{(q^2,q^2,q^2/a,q^2/a;q^2)_k}q^{2k}\bigg\}^2,
\end{align*}
where $\bar{\beta}_q(k)$ signifies the $c=1$ case of $\beta_q(k)$.
The $b=1$ case of it can be stated as follows: modulo
$[n]\Phi_n(q)^3(1-aq^n)(a-q^n)$,
\begin{align}
&\sum_{i+j\leq n-1}\lambda_q(i)\lambda_q(j)
\notag\\
&\quad\equiv [n]^2q^{1-n}\frac{(1-q^n)^2(1+a^2-aq^n)}{(1-a)^2}
\bigg\{\sum_{k=0}^{(n-1)/2}\frac{(aq,q/a;q^2)_k(q;q^2)_k^2}{(q^2;q^2)_k^4}q^{2k}\bigg\}^2
\notag\\[1mm]
&\quad-[n]^2(a/q)^{n-1}\frac{(2-q^n)(1-aq^n)(a-q^n)}{(1-a)^2}
\notag\\[1mm]
&\qquad\times\frac{(q^2/a;q^2)_{(n-1)/2}^2}{(aq^2;q^2)_{(n-1)/2}^2}
\bigg\{\sum_{k=0}^{(n-1)/2}\frac{(q/a,q;q^2)_k^2}{(q^2/a,q^2;q^2)_k^2}q^{2k}\bigg\}^2.
\label{eq:wei-gg}
\end{align}

Recollect Sear's $_4\phi_3$ transformation (cf. \cite[Appendix
(III.15)]{Gasper})
\begin{align}
 _{4}\phi_{3}\!\left[\begin{array}{cccccccc}
a,& b,& c, & q^{-m} \\
  & d, & e,  & f
\end{array};q,\, q
\right]&=a^m\frac{(e/a, f/a;q)_{m}} {(e, f;q)_{m}}
\notag\\[1mm]
&\quad\times{_{4}\phi_{3}}\!\left[\begin{array}{cccccccc}
a,& d/b,& d/c, & q^{-m} \\
  & d, & aq^{1-m}/e,  & aq^{1-m}/f
\end{array};q,\, q
\right], \label{eq:Sear}
\end{align}
where $def=abcq^{1-m}$. Performing the replacements $q\to q^2,a\to
q^{1+n}, b\to q/a, c\to q/a, d\to q^2/a, e\to q^2,  f\to q^2/a, m\to
(n-1)/2$ in \eqref{eq:Sear}, we arrive at

\begin{align*}
 _{4}\phi_{3}\!\left[\begin{array}{cccccccc}
q/a,& q/a,& q^{1+n}, & q^{1-n} \\
  & q^2, & q^2/a,  & q^2/a
\end{array};q^2,\, q^2
\right]&=a^{(1-n)/2}\frac{(aq^2;q^2)_{(n-1)/2}}{(q^2/a;q^2)_{(n-1)/2}}
\notag\\
&\quad\times{_{4}\phi_{3}}\!\left[\begin{array}{cccccccc}
q,& q,& q^{1+n}, & q^{1-n} \\
  & q^2, & aq^2,  & q^2/a
\end{array};q^2,\, q^2
\right].
\end{align*}
Noting that
$(q^{1+n},q^{1-n};q^2)_k\equiv(q;q^2)_k^2\pmod{\Phi_n(q)^2}$, we
have
\begin{align}
\sum_{k=0}^{(n-1)/2}\frac{(q/a,q;q^2)_k^2}{(q^2/a,q^2;q^2)_k^2}q^{2k}
&\equiv
a^{(1-n)/2}\frac{(aq^2;q^2)_{(n-1)/2}}{(q^2/a;q^2)_{(n-1)/2}}
\notag\\[1mm]
&\quad\times\sum_{k=0}^{(n-1)/2}\frac{(q;q^2)_k^4}{(aq^2,q^2/a;q^2)_k(q^2;q^2)_k^2}q^{2k}
\pmod{\Phi_n(q)^2}. \label{eq:wei-hh}
\end{align}
Substituting \eqref{eq:wei-hh} into \eqref{eq:wei-gg}, we are led to
\eqref{eq:wei-aa} to finish the proof.
\end{proof}

Afterwards, we start to prove Theorem \ref{thm-b}.

\begin{proof}[Proof of Theorem \ref{thm-b}]
Through the relation \eqref{relation}, \eqref{eq:wei-aa} can be
manipulated as the following form: modulo
$[n]\Phi_n(q)^3(1-aq^n)(a-q^n)$,
\begin{align}
\sum_{i+j\leq n-1}\lambda_q(i)\lambda_q(j)
&\equiv[n]^2q^{1-n}\bigg\{\sum_{k=0}^{(n-1)/2}\frac{(aq,q/a;q^2)_k(q;q^2)_k^2}{(q^2;q^2)_k^4}q^{2k}\bigg\}^2
\notag\\[1mm]
&\quad\,+[n]^2q^{1-n}\frac{(2-q^n)(1-aq^n)(a-q^n)}{(1-a)^2}\Omega_n(a),
\label{eq:wei-ii}
\end{align}
where
\begin{align*}
\Omega_n(a)&=
\bigg\{\sum_{k=0}^{(n-1)/2}\frac{(aq,q/a;q^2)_k(q;q^2)_k^2}{(q^2;q^2)_k^4}q^{2k}\bigg\}^2
\notag\\[1mm]
&\quad-\bigg\{\sum_{k=0}^{(n-1)/2}\frac{(q;q^2)_k^4}{(aq^2,q^2/a;q^2)_k(q^2;q^2)_k^2}q^{2k}\bigg\}^2.
\end{align*}
Thanks to L'H\^{o}pital rule, we derive
\begin{align*}
\lim_{a\to1}\frac{\Omega_n(a)}{(1-a)^2}
=-2\sum_{k=0}^{(n-1)/2}\frac{(q;q^2)_k^4}{(q^2;q^2)_k^4}q^{2k}
\sum_{k=0}^{(n-1)/2}\frac{(q;q^2)_k^4}{(q^2;q^2)_k^4}q^{2k}
\sum_{t=1}^{2k}\frac{q^t}{(1-q^t)^2}.
\end{align*}
Letting $a\to1$ in \eqref{eq:wei-ii} and using the above limit, we
find \eqref{eq:wei-b}.
\end{proof}

\section{Proof of Theorem \ref{thm-c}}

In order to prove Theorem \ref{thm-c}, we need Lemma \ref{lemma-d}
and the following two lemmas.

\begin{lemma}\label{lemma-e} Let $d,m,n,r$ be positive integers
subject to $n\equiv r\pmod{d}$, $(r,d)=1$, and $d\geq m\geq2$. Let
$\{\lambda(k)\}_{k=0}^\infty$ be a complex sequence. If
$\lambda(k)=0$ for $(n+d-r)/d\leq k\leq n-1$, then
$$
\sum_{k_1+k_2+\cdots+k_m\leq
n-1}\lambda(k_1)\lambda(k_2)\cdots\lambda(k_m)
=\Bigg\{\sum_{k=0}^{(n-r)/d}\lambda(k)\Bigg\}^m.
$$
\end{lemma}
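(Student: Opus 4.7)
The plan is to observe that Lemma \ref{lemma-e} is, in essence, a purely combinatorial unfolding of an $m$-fold Cauchy product, and that its substance lies entirely in verifying that the global constraint $k_1+\cdots+k_m\leq n-1$ is redundant once the per-coordinate vanishing hypothesis on $\lambda$ is invoked. My first step will be to record the arithmetic identity $(n+d-r)/d=(n-r)/d+1$, which follows because $n\equiv r\pmod{d}$ forces $(n-r)/d$ to be a nonnegative integer. This rewrites the vanishing hypothesis as: $\lambda(k)=0$ whenever $(n-r)/d<k\leq n-1$.

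Next I will note that in the sum on the left-hand side, every index $k_i$ is a nonnegative integer satisfying $k_i\leq k_1+\cdots+k_m\leq n-1$. Combined with the vanishing hypothesis, this means that only indices with $0\leq k_i\leq (n-r)/d$ can contribute a nonzero summand. So the left-hand side equals
\begin{equation*}
\sum_{\substack{0\leq k_1,\ldots,k_m\leq (n-r)/d\\ k_1+\cdots+k_m\leq n-1}}\lambda(k_1)\cdots\lambda(k_m).
\end{equation*}

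The crux is then to remove the diagonal restriction $k_1+\cdots+k_m\leq n-1$. Writing $n=r+qd$ with $q=(n-r)/d\geq 0$, if $0\leq k_i\leq q$ for each $i$ then
\begin{equation*}
k_1+\cdots+k_m\leq mq\leq dq=n-r\leq n-1,
\end{equation*}
where the second inequality uses $m\leq d$ and the last uses $r\geq 1$ (forced by $(r,d)=1$ together with $d\geq 2$, and by $r$ being a positive integer). Hence the constraint $k_1+\cdots+k_m\leq n-1$ is automatic on the effective index set, and the sum factors into the $m$-th power of $\sum_{k=0}^{(n-r)/d}\lambda(k)$, yielding the asserted identity.

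There is no real obstacle in this argument; the only place to be careful is the chain of inequalities $mq\leq dq=n-r\leq n-1$, which is where all three hypotheses $m\leq d$, $n\equiv r\pmod{d}$ and $r\geq 1$ enter simultaneously. The coprimality $(r,d)=1$ is not needed for the combinatorial statement itself but is retained because the lemma is designed to feed into Theorem \ref{thm-c}, where it guarantees that $r$ is a genuine residue class modulo $d$.
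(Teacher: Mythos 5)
Your proof is correct and takes essentially the same route as the paper's: both arguments come down to the observation that the vanishing hypothesis confines every index to $\{0,1,\dots,(n-r)/d\}$, after which the chain $k_1+\cdots+k_m\leq m(n-r)/d\leq d(n-r)/d=n-r\leq n-1$ renders the diagonal constraint vacuous and lets the sum factor. The paper merely packages this as a two-step split (the sum over $k_1+\cdots+k_m\leq n-r$ factors, and the tail $n-r+1\leq k_1+\cdots+k_m\leq n-1$ vanishes because some $k_i\geq(n+d-r)/d$), which is just the contrapositive of your inequality.
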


\begin{proof}
Similar to the proof of Lemma \ref{lemma-b}, it is not difficult to
show that
$$
\sum_{k_1+\cdots+k_m\leq
n-r}\lambda(k_1)\lambda(k_2)\cdots\lambda(k_m)
=\Bigg\{\sum_{k=0}^{(n-r)/d}\lambda(k)\Bigg\}^m.
$$
When $n-r+1\leq k_1+k_2+\cdots+k_m\leq n-1,$ there exists at least a
$k_i$ such that $k_i\geq(n+d-r)/d.$ Thus we have
$$
 \sum_{n-r+1\leq
k_1+k_2+\cdots+k_m\leq n-1}
\lambda(k_1)\lambda(k_2)\cdots\lambda(k_m) =0.
$$
The sum of the last two equalities produces the desired result.
\end{proof}

\begin{lemma}\label{lemma-f}
Let $n$ be a positive odd integer. Then
\begin{align*}
\sum_{k=0}^{(n-r)/d}[2dk+r]\frac{(aq^r,q^r/a,bq^r,q^r/b,q^r/c,q^r;q^d)_k}{(q^d/a,aq^d,q^d/b,bq^d,cq^d,q^d;q^d)_k}(cq^{2d-3r})^k
\equiv 0\pmod{[n]}.
\end{align*}
\end{lemma}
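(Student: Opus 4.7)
The plan is to deduce Lemma~\ref{lemma-f} as a direct specialization of Ni and Wang's congruence \eqref{eq:NW-a}, which was already recalled in the proof of Lemma~\ref{lemma-c}. The task therefore reduces to matching the hypotheses of \eqref{eq:NW-a} with the standing assumptions of Theorem~\ref{thm-c}, namely $n\equiv r\pmod{d}$ and $(r,d)=1$, and then choosing the free parameter $\mu$ in \eqref{eq:NW-a} so that its upper summation index coincides with the one appearing in Lemma~\ref{lemma-f}.

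First I would set $\mu=(n-r)/d$, which is a nonnegative integer by $n\equiv r\pmod{d}$ and which equals the upper index in the statement to be proved. I would then verify the three conditions required by \eqref{eq:NW-a}: (a) $\gcd(n,d)=1$, since any common divisor of $n$ and $d$ must also divide $r=n-d\mu$, and hence must divide $\gcd(r,d)=1$; (b) $d\mu\equiv -r\pmod{n}$, which is immediate from $d\mu=n-r$; and (c) $0\le\mu\le n-1$, which holds because $r\ge 1$ and $d\ge 2$. Once these are in place, \eqref{eq:NW-a} applies verbatim with this choice of $\mu$ and yields the stated vanishing modulo $[n]$.

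The main (mild) obstacle is simply bookkeeping: one must confirm that every parameter slot in \eqref{eq:NW-a}, including the base $q^d$, the shift $q^r$, and the weight $cq^{2d-3r}$, matches the corresponding slot in the summand of Lemma~\ref{lemma-f} exactly. Since \eqref{eq:NW-a} was stated in precisely this form, no genuine computation or further $q$-hypergeometric manipulation is needed beyond verifying the three arithmetic conditions above.
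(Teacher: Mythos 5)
Your proposal is correct and follows exactly the paper's own argument, which consists of the single line ``Fixing $\mu=(n-r)/d$ in \eqref{eq:NW-a}, we obtain the expected conclusion.'' Your explicit verification of the hypotheses $\gcd(n,d)=1$, $d\mu\equiv -r\pmod{n}$, and $0\le\mu\le n-1$ is a welcome addition that the paper leaves implicit.
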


\begin{proof}
 Fixing $\mu=(n-r)/d$ in \eqref{eq:NW-a}, we obtain the expected conclusion.
\end{proof}

Currently we are about to furnish a  parametric generalization of
Theorem \ref{thm-c}.

\begin{theorem}\label{thm-f}
Let $d,m,n,r$ be positive integers subject to $n\equiv r\pmod{d}$,
$(r,d)=1$, and $d\geq m\geq2$. Let $\lambda_q(k)$ represent
$$\lambda_q(k)=[2dk+r]\frac{(q^r;q^d)_k^4(aq^r,q^r/a;q^d)_k}{(q^d;q^d)_k^4(q^d/a,aq^d;q^d)_k}q^{(2d-3r)k}\quad\text{with}\quad k\geq
  0.$$
 Then, modulo $\Phi_n(q)^4(1-aq^n)(a-q^n)$,
\begin{align}
&\sum_{i_1+i_2+\cdots+i_m\leq n-1}\lambda_q(i_1)\cdots\lambda_q(i_m)
\notag\\[1mm]
&\quad\equiv
[n]^mq^{mr(r-n)/d}\frac{(1-q^n)^2(1+a^2-aq^n)}{(1-a)^2}\frac{(q^{2r};q^d)_{(n-r)/d}^m}{(q^{d};q^d)_{(n-r)/d}^m}
\notag\\[1mm]
&\qquad\times
\bigg\{\sum_{k=0}^{(n-r)/d}\frac{(aq^r,q^r/a,q^r,q^{d-r};q^d)_k}{(q^d;q^d)_k^3(q^{2r};q^d)_k}q^{dk}\bigg\}^{m}
\notag
\\[1mm]
&\quad-[n]^mq^{mr(r-n)/d}\frac{(2-q^n)(1-aq^n)(a-q^n)}{(1-a)^2}\frac{(q^{2r};q^d)_{(n-r)/d}^m}{(q^{d};q^d)_{(n-r)/d}^m}
\notag\\[1mm]
&\qquad\times
\bigg\{\sum_{k=0}^{(n-r)/d}\frac{(q^r;q^d)_k^3(q^{d-r};q^d)_k}{(aq^d,q^d/a,q^d,q^{2r};q^d)_k}q^{dk}\bigg\}^{m}.
\label{eq:wei-aaa}
\end{align}
\end{theorem}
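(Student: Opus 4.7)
The plan mirrors the proofs of Theorems~\ref{thm-d} and \ref{thm-e}, upgrading from two-fold to $m$-fold convolutions. Introduce the three-parameter sum
$$\beta_q(k)=[2dk+r]\frac{(aq^r,q^r/a,bq^r,q^r/b,q^r/c,q^r;q^d)_k}{(q^d/a,aq^d,q^d/b,bq^d,cq^d,q^d;q^d)_k}(cq^{2d-3r})^k,$$
so that $\lambda_q(k)$ of Theorem~\ref{thm-f} is its $b=c=1$ specialization. I would proceed in three stages as in Sections~2 and 3: (i) prove the $m$-fold convolution $\sum_{i_1+\cdots+i_m\leq n-1}\beta_q(i_1)\cdots\beta_q(i_m)$ vanishes modulo $[n]$; (ii) compute the same convolution in closed form modulo each of $(1-aq^n)(a-q^n)$, $(1-bq^n)(b-q^n)$, and $(c-q^n)$; (iii) combine via the Chinese remainder theorem using Lemma~\ref{lemma-d} and specialize $b=c=1$.

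For stage (i), I would take an $n$-th root of unity $\zeta\neq 1$ of exact order $u\mid n$; since $\gcd(n,d)=\gcd(r,d)=1$ we have $\gcd(u,d)=1$, so there is a unique $\mu_u\in\{0,\ldots,u-1\}$ with $d\mu_u\equiv -r\pmod u$. The numerator factor $(q^r;q^d)_k$ then kills $\beta_\zeta(k)$ for $k>\mu_u$, and the $n=u$, $\mu=\mu_u$ case of Ni and Wang's identity \eqref{eq:NW-a} gives $\sum_{k=0}^{\mu_u}\beta_\zeta(k)=0$. The pseudoperiodicity $\beta_\zeta(lu+k)=\beta_\zeta(lu)\beta_\zeta(k)$ for $0\leq k\leq u-1$, combined with $m\leq d$ (which forces the $m$-fold support to stay inside a single ``block'' $\{0,\ldots,u-1\}$ modulo $u$), then factors $\sum_{i_1+\cdots+i_m\leq n-1}\beta_\zeta(\cdot)$ into the vanishing $u$-level sum times a convolution over the $\beta_\zeta(iu)$'s, parallel to the two-fold argument in Theorem~\ref{thm-e}. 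For stage (ii), each of $a=q^{\pm n}$, $b=q^{\pm n}$, $c=q^n$ truncates $\beta_q(k)$ at $k=(n-r)/d$; Watson's $_8\phi_7$ transformation \eqref{eq:watson} with $q\mapsto q^d$ and parameters chosen in analogy with the proofs of Theorems~\ref{thm-d}/\ref{thm-e} reduces each terminating sum to a prefactor times a shorter $_4\phi_3$-type sum, and Lemma~\ref{lemma-e} raises it to the $m$-th power.

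For stage (iii), the three closed forms together with the $[n]$-vanishing from stage (i) combine, via the coprimality relations in Lemma~\ref{lemma-d}, into an $m$-fold analogue of \eqref{eq:wei-ff} modulo $[n](1-aq^n)(a-q^n)(1-bq^n)(b-q^n)(c-q^n)$. Specializing $c=1$ and then $b=1$, rewriting one of the inner sums via Sear's $_4\phi_3$ transformation \eqref{eq:Sear} in the spirit of \eqref{eq:wei-hh}, and invoking the identity \eqref{relation} should deliver \eqref{eq:wei-aaa} modulo $\Phi_n(q)^4(1-aq^n)(a-q^n)$. The main obstacle is stage (i): extending the two-fold convolution cancellation to $m$ factors requires care because the truncation index $\mu_u$ varies with $u\mid n$ and in general does not equal $(u-r)/d$ (since $u\equiv r\pmod d$ need not hold). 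Verifying that $m\mu_u\leq u-1$, so that no wraparound occurs in the $m$-fold convolution within a single $u$-block, is the delicate combinatorial point; once this is settled, the remaining Watson/CRT/Sear manipulations run as in Sections~2 and 3.
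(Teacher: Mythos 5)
Your stages (ii) and (iii) coincide with the paper's proof: the same three-parameter $\beta_q(k)$, the same Watson specializations $a=q^{\pm n}$, $b=q^{\pm n}$, $c=q^n$ truncating at $(n-r)/d$, Lemma \ref{lemma-e} to raise to the $m$-th power, the Chinese remainder combination via Lemma \ref{lemma-d}, the specializations $c=1$ then $b=1$, Sears' transformation \eqref{eq:Sear}, and the identity \eqref{relation}. The one genuine divergence is stage (i), and it is a real problem as you have set it up: you aim to prove the $m$-fold convolution of $\beta_q$ vanishes modulo $[n]$, which forces you to handle a root of unity $\zeta$ of every order $u\mid n$. For such $u$ the congruence $u\equiv r\pmod d$ generally fails, the truncation index $\mu_u$ (defined by $d\mu_u\equiv -r\pmod u$) can be as large as roughly $u$, and then $m\mu_u>u-1$, so the $m$-fold support does wrap around the $u$-block and the factorization into (vanishing $u$-level sum) $\times$ (convolution of the $\beta_\zeta(iu)$) breaks down. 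For instance, with $d=3$, $r=1$, $n=25$, $u=5$ one finds $\mu_5=3$, and already for $m=2$ the terms with $k_1+k_2\in\{5,6\}$ spoil the identity $\sum_{k_1+k_2\le 4}\beta_\zeta(k_1)\beta_\zeta(k_2)=\{\sum_{k=0}^{3}\beta_\zeta(k)\}^2$. So the "delicate combinatorial point" you flag cannot be settled affirmatively in general, and the mod-$[n]$ claim is not available by this route.

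The resolution is that you do not need it. The modulus in \eqref{eq:wei-aaa} is $\Phi_n(q)^4(1-aq^n)(a-q^n)$, with no factor $[n]$ (in contrast to Theorems \ref{thm-d} and \ref{thm-e}); the four powers of $\Phi_n(q)$ are accounted for by one power from stage (i) plus one from $(c-q^n)\to(1-q^n)$ at $c=1$ and two from $(1-bq^n)(b-q^n)\to(1-q^n)^2$ at $b=1$. Hence in stage (i) it suffices to prove $\sum_{i_1+\cdots+i_m\le n-1}\beta_q(i_1)\cdots\beta_q(i_m)\equiv 0\pmod{\Phi_n(q)}$, i.e., to evaluate only at \emph{primitive} $n$-th roots of unity. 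There $n\equiv r\pmod d$ and $\gcd(n,d)=1$ give the truncation index exactly $(n-r)/d$, so Lemma \ref{lemma-e} factors the $m$-fold sum as $\{\sum_{k=0}^{(n-r)/d}\beta_\zeta(k)\}^m$ and Lemma \ref{lemma-f} (the $\mu=(n-r)/d$ case of \eqref{eq:NW-a}) makes it vanish; no block decomposition over divisors $u$ of $n$ is needed. With stage (i) weakened in this way, and the CRT step correspondingly performed modulo $\Phi_n(q)(1-aq^n)(a-q^n)(1-bq^n)(b-q^n)(c-q^n)$ rather than modulo $[n](\cdots)$, your outline becomes exactly the paper's argument.
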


\begin{proof}
 Set
$$\beta_q(k)=[2dk+r]\frac{(aq^r,q^r/a,bq^r,q^r/b,q^r/c,q^r;q^d)_k}{(q^d/a,aq^d,q^d/b,bq^d,cq^d,q^d;q^d)_k^4}(cq)^{(2d-3r)k}.$$
Firstly, we hope to prove the following $q$-congruence:
\begin{equation}\label{eq:wei-bbb}
\sum_{i_1+i_2+\cdots+i_m\leq n-1}\beta_q(i_1)\cdots\beta_q(i_m)
\equiv 0\pmod{\Phi_n(q)}.
\end{equation}
It is obvious that the $n=1$ case of \eqref{eq:wei-bb} is right. We
now posit that $n>1$. Let $\xi\neq 1$ be a primitive $n$-th root of
unity. Considering that $(q^r;q^d)_k$ arises in the numerator of
$\beta_q(k)$, so $\beta_\zeta(k)=0$ for $(n+d-r)/d\leq k\leq n-1$.
Via Lemmas \ref{lemma-e} and \ref{lemma-f}, we have
$$
\sum_{i_1+i_2+\cdots+i_m\leq n-1
}\beta_\zeta(i_1)\cdots\beta_\zeta(i_m)=\Bigg\{\sum_{k=0}^{(n-r)/d}\beta_\zeta(k)\Bigg\}^m=0.
$$
Noticing that the upper equation is correct for any $n$-th root of
unit $\zeta\neq 1$, we know the correctness of \eqref{eq:wei-bbb}.

Secondly, performing the replacements $q\to q^d,a\to q^r, b\to bq^r,
c\to q^r/c, d\to q^r/b, e\to q^{r+n},m\to (n-r)/d$ in
\eqref{eq:watson}, there is
\begin{equation*}
\sum_{k=0}^{(n-r)/d}\tilde{\beta}_q(k)=[n](b/q^r)^{(n-r)/d}\frac{(q^{2r}/b;q^d)_{(n-r)/d}}{(bq^d;q^d)_{(n-r)/d}}
\sum_{k=0}^{(n-r)/d}\frac{(cq^{d-r}/b,q^r/b,q^{r+n},q^{r-n};q^d)_k}{(q^d,q^d/b,q^{2r}/b,cq^d;q^d)_k}q^{dk},
\end{equation*}
where $\tilde{\beta}_q(k)$ means the $a=q^n$ or $a=q^{-n}$ case of
$\beta_q(k)$ subject to $\tilde{\beta}_q(k)=0$ \:for\:
$(n+d-r)/d\leq k\leq n-1$. Via Lemma \ref{lemma-e}, it is ordinary
to observe that
\begin{align*}
\sum_{i_1+i_2+\cdots+i_m\leq
n-1}\tilde{\beta}_q(i_1)\cdots\tilde{\beta}_q(i_m) &\equiv
[n]^m(b/q^r)^{m(n-r)/d}\frac{(q^{2r}/b;q^d)_{(n-r)/d}^m}{(bq^d;q^d)_{(n-r)/d}^m}
\notag\\[1mm]&\quad
\times
\bigg\{\sum_{k=0}^{(n-r)/d}\frac{(cq^{d-r}/b,q^r/b,q^{r+n},q^{r-n};q^d)_k}{(q^d,q^d/b,q^{2r}/b,cq^d;q^d)_k}q^{dk}\bigg\}^{m}.
\end{align*}
 Then there holds the $q$-supercongruence: modulo $(1-aq^n)(a-q^n)$,
\begin{align}
\sum_{i_1+i_2+\cdots+i_m\leq n-1}\beta_q(i_1)\cdots\beta_q(i_m)
&\equiv
[n]^m(b/q^r)^{m(n-r)/d}\frac{(q^{2r}/b;q^d)_{(n-r)/d}^m}{(bq^d;q^d)_{(n-r)/d}^m}
\notag\\[1mm]&\quad \times
\bigg\{\sum_{k=0}^{(n-r)/d}\frac{(aq^r,q^r/a,q^r/b,cq^{d-r}/b;q^d)_k}{(q^d,q^d/b,q^{2r}/b,cq^d;q^d)_k}q^{dk}\bigg\}^{m}.
\label{eq:wei-ccc}
\end{align}
Interchanging the parameters $a$ and $b$ in \eqref{eq:wei-ccc}, we
discover the dual form: modulo $(1-bq^n)(b-q^n)$,
\begin{align}
\sum_{i_1+i_2+\cdots+i_m\leq n-1}\beta_q(i_1)\cdots\beta_q(i_m)
&\equiv
[n]^m(a/q^r)^{m(n-r)/d}\frac{(q^{2r}/a;q^d)_{(n-r)/d}^m}{(aq^d;q^d)_{(n-r)/d}^m}
\notag\\[1mm]&\quad \times
\bigg\{\sum_{k=0}^{(n-r)/d}\frac{(bq^r,q^r/b,q^r/a,cq^{d-r}/a;q^d)_k}{(q^d,q^d/a,q^{2r}/a,cq^d;q^d)_k}q^{dk}\bigg\}^{m}.
\label{eq:wei-ddd}
\end{align}

Employing the replacements $q\to q^d,a\to q^r, b\to bq^r, c\to
q^r/b, d\to aq^r, e\to q^{r}/a,m\to (n-r)/d$ in \eqref{eq:watson},
it is easy to realize that
\begin{equation*}
\sum_{k=0}^{(n-r)/d}\hat{\beta}_q(k)=[n]\frac{(q^r,q^{d-r};q^d)_{(n-r)/d}}{(aq^d,q^d/a;q^d)_{(n-r)/d}}
\sum_{k=0}^{(n-r)/d}\frac{(q^{d-r},aq^r,q^{r}/a,q^{r-n};q^d)_k}{(q^d,q^d/b,bq^d,q^{2r-n};q^d)_k}q^{dk},
\end{equation*}
where $\hat{\beta}_q(k)$ indicates the $c=q^n$ case of $\beta_q(k)$
and meets $\hat{\beta}_q(k)=0$ \:for\: $(n+d-r)/d\leq k\leq n-1$.
Therefore, Lemma \ref{lemma-f} gives
\begin{align*}
\sum_{i_1+i_2+\cdots+i_m\leq
n-1}\hat{\beta}_q(i_1)\cdots\hat{\beta}_q(i_m) &\equiv
[n]^m\frac{(q^r,q^{d-r};q^d)_{(n-r)/d}^m}{(aq^d,q^d/a;q^d)_{(n-r)/d}^m}
\notag\\[1mm]&\quad
\times
\bigg\{\sum_{k=0}^{(n-r)/d}\frac{(q^{d-r},aq^r,q^{r}/a,q^{r-n};q^d)_k}{(q^d,q^d/b,bq^d,q^{2r-n};q^d)_k}q^{dk}\bigg\}^{m}.
\end{align*}
 Then there holds the $q$-supercongruence: modulo $(c-q^n)$,
\begin{align}
\sum_{i_1+i_2+\cdots+i_m\leq n-1}\beta_q(i_1)\cdots\beta_q(i_m)
&\equiv
[n]^m\frac{(q^r,q^{d-r};q^d)_{(n-r)/d}^m}{(aq^d,q^d/a;q^d)_{(n-r)/d}^m}
\notag\\[1mm]&\quad \times
\bigg\{\sum_{k=0}^{(n-r)/d}\frac{(q^{d-r},aq^r,q^{r}/a,q^{r}/c;q^d)_k}{(q^d,q^d/b,bq^d,q^{2r}/c;q^d)_k}q^{dk}\bigg\}^{m}.
\label{eq:wei-eee}
\end{align}

It is routine to check that the polynomials $(1-aq^n)(a-q^n)$,
$(1-bq^n)(b-q^n)$, $(c-q^n)$, and $\Phi_n(q)$ are relatively prime
to one another. Engaging the Chinese remainder theorem for coprime
polynomials, from Lemma \ref{lemma-d} and
\eqref{eq:wei-bbb}-\eqref{eq:wei-eee}, we deduce the following
$q$-supercongruence: modulo
$\Phi_n(q)(1-aq^n)(a-q^n)(1-bq^n)(b-q^n)(c-q^n)$,
\begin{align}
&\sum_{i_1+i_2+\cdots+i_m\leq n-1}\beta_q(i_1)\cdots\beta_q(i_m)
\notag\\
&\quad\equiv\frac{(1-bq^n)(b-q^n)(c-q^n)\{-1-a^2-a^4+ac+a^3c+a(1+a^2-ac)q^n\}}{(1-ba)(b-a)(1-ac)(a-c)}
\notag\\[1mm]
&\qquad\times[n]^m(b/q^r)^{m(n-r)/d}\frac{(q^{2r}/b;q^d)_{(n-r)/d}^m}{(bq^d;q^d)_{(n-r)/d}^m}
\bigg\{\sum_{k=0}^{(n-r)/d}\frac{(aq^r,q^r/a,q^r/b,cq^{d-r}/b;q^d)_k}{(q^d,q^d/b,q^{2r}/b,cq^d;q^d)_k}q^{dk}\bigg\}^{m}
\notag
\end{align}
\begin{align}
&\quad+\frac{(1-aq^n)(a-q^n)(c-q^n)\{-1-b^2-b^4+bc+b^3c+b(1+b^2-bc)q^n\}}{(1-ab)(a-b)(1-bc)(b-c)}
\notag\\[1mm]
&\qquad\times[n]^m(a/q^r)^{m(n-r)/d}\frac{(q^{2r}/a;q^d)_{(n-r)/d}^m}{(aq^d;q^d)_{(n-r)/d}^m}
\bigg\{\sum_{k=0}^{(n-r)/d}\frac{(bq^r,q^r/b,q^r/a,cq^{d-r}/a;q^d)_k}{(q^d,q^d/a,q^{2r}/a,cq^d;q^d)_k}q^{dk}\bigg\}^{m}.
\notag\\[1mm]
&\quad+\frac{(1-aq^n)(a-q^n)(1-bq^n)(b-q^n)}{(1-ac)(a-c)(1-bc)(b-c)}
\notag\\[1mm]
&\qquad\times[n]^m\frac{(q^r,q^{d-r};q^d)_{(n-r)/d}^m}{(aq^d,q^d/a;q^d)_{(n-r)/d}^m}
\bigg\{\sum_{k=0}^{(n-r)/d}\frac{(q^{d-r},aq^r,q^{r}/a,q^{r}/c;q^d)_k}{(q^d,q^d/b,bq^d,q^{2r}/c;q^d)_k}q^{dk}\bigg\}^{m}.
\label{eq:wei-fff}
\end{align}
Selecting $c=1$ in \eqref{eq:wei-fff}, we gain the following
formula: modulo $\Phi_n(q)^2(1-aq^n)(a-q^n)(1-bq^n)(b-q^n)$,
\begin{align*}
&\sum_{i_1+i_2+\cdots+i_m\leq
n-1}\bar{\beta}_q(i_1)\cdots\bar{\beta}_q(i_m)
\notag\\
&\quad\equiv\frac{(1-bq^n)(b-q^n)(1+a^2-aq^n)}{(1-ba)(b-a)}
\notag\\[1mm]
&\qquad\times[n]^m(b/q^r)^{m(n-r)/d}\frac{(q^{2r}/b;q^d)_{(n-r)/d}^m}{(bq^d;q^d)_{(n-r)/d}^m}
\bigg\{\sum_{k=0}^{(n-r)/d}\frac{(aq^r,q^r/a,q^r/b,q^{d-r}/b;q^d)_k}{(q^d,q^d/b,q^{2r}/b,q^d;q^d)_k}q^{dk}\bigg\}^{m}
\notag
\end{align*}
\begin{align*}
&\quad+\frac{(1-aq^n)(a-q^n)(1+b^2-bq^n)}{(1-ab)(a-b)}
\notag\\[1mm]
&\qquad\times[n]^m(a/q^r)^{m(n-r)/d}\frac{(q^{2r}/a;q^d)_{(n-r)/d}^m}{(aq^d;q^d)_{(n-r)/d}^m}
\bigg\{\sum_{k=0}^{(n-r)/d}\frac{(bq^r,q^r/b,q^r/a,q^{d-r}/a;q^d)_k}{(q^d,q^d/a,q^{2r}/a,q^d;q^d)_k}q^{dk}\bigg\}^{m},
\end{align*}
where $\bar{\beta}_q(k)$ signifies the $c=1$ case of $\beta_q(k)$.
The $b=1$ case of it can be stated as follows: modulo
$\Phi_n(q)^4(1-aq^n)(a-q^n)$,
\begin{align}
&\sum_{i_1+i_2+\cdots+i_m\leq n-1}\lambda_q(i_1)\cdots\lambda_q(i_m)
\notag\\
&\quad\equiv \frac{(1-q^n)^2(1+a^2-aq^n)}{(1-a)^2}[n]^mq^{rm(r-n)/d}
\notag\\
&\qquad\times \frac{(q^{2r};q^d)_{(n-r)/d}^m}{(q^d;q^d)_{(n-r)/d}^m}
\bigg\{\sum_{k=0}^{(n-r)/d}\frac{(q^r,q^{d-r},aq^r,q^r/a;q^d)_k}{(q^d;q^d)_k^3(q^{2r};q^d)_k}q^{dk}\bigg\}^{m}
\notag\\[1mm]
&\quad-\frac{(2-q^n)(1-aq^n)(a-q^n)}{(1-a)^2}[n]^m(a/q^r)^{m(n-r)/d}
\notag\\[1mm]
&\qquad\times\frac{(q^{2r}/a;q^d)_{(n-r)/d}^m}{(aq^d;q^d)_{(n-r)/d}^m}
\bigg\{\sum_{k=0}^{(n-r)/d}\frac{(q^r;q^d)_k^2(q^r/a,q^{d-r}/a;q^d)_k}{(q^d;q^d)_k^2(q^d/a,q^{2r}/a;q^d)_k}q^{dk}\bigg\}^{m}.
\label{eq:wei-ggg}
\end{align}
 Performing the replacements $q\to q^d,a\to
q^{r+n}, b\to q^r/a, c\to q^{d-r}/a, d\to q^d/a, e\to q^d,  f\to
q^{2r}/a, m\to (n-r)/d$ in \eqref{eq:Sear}, we arrive at
\begin{align*}
 _{4}\phi_{3}\!\left[\begin{array}{cccccccc}
q^r/a,& q^{d-r}/a,& q^{r+n}, & q^{r-n} \\
  & q^d, & q^d/a,  & q^{2r}/a
\end{array};q^d,\, q^d
\right]&=a^{(r-n)/d}\frac{(q^{2r},aq^d;q^d)_{(n-r)/d}}{(q^d,q^{2r}/a;q^d)_{(n-r)/2}}
\notag\\
&\quad\times{_{4}\phi_{3}}\!\left[\begin{array}{cccccccc}
q^r,& q^{d-r},& q^{r+n}, & q^{r-n} \\
  & q^{2r}, & aq^d,  & q^d/a
\end{array};q^d,\, q^d
\right].
\end{align*}
Noting that
$(q^{r+n},q^{r-n};q^d)_k\equiv(q^r;q^d)_k^2\pmod{\Phi_n(q)^2}$, we
have

\begin{align}
\sum_{k=0}^{(n-r)/d}\frac{(q^r;q^d)_k^2(q^r/a,q^{d-r}/a;q^d)_k}{(q^d;q^d)_k^2(q^d/a,q^{2r}/a;q^d)_k}q^{dk}
\equiv
a^{(r-n)/d}\frac{(q^{2r},aq^d;q^d)_{(n-r)/d}}{(q^d,q^{2r}/a;q^d)_{(n-r)/2}}
\notag\\[1mm]
\times\sum_{k=0}^{(n-r)/d}\frac{(q^r;q^d)_k^3(q^{d-r};q^d)_k}{(aq^d,q^d/a,q^d,q^{2r};q^d)_k}q^{dk}
\pmod{\Phi_n(q)^2}. \label{eq:wei-hhh}
\end{align}
Substituting \eqref{eq:wei-hhh} into \eqref{eq:wei-ggg}, we are led
to \eqref{eq:wei-aaa} to finish the proof.

\end{proof}

Whereupon, we plan to prove Theorem \ref{thm-c}.

\begin{proof}[Proof of Theorem \ref{thm-c}]
Through \eqref{relation}, we can reformulate \eqref{eq:wei-aaa} as
follows: modulo $\Phi_n(q)^4(1-aq^n)(a-q^n)$,
\begin{align}
&\sum_{i_1+i_2+\cdots+i_m\leq n-1}\lambda_q(i_1)\cdots\lambda_q(i_m)
\notag\\[1mm]
&\qquad\equiv
[n]^mq^{mr(r-n)/d}\frac{(q^{2r};q^d)_{(n-r)/d}^m}{(q^{d};q^d)_{(n-r)/d}^m}
\bigg\{\sum_{k=0}^{(n-r)/d}\frac{(aq^r,q^r/a,q^r,q^{d-r};q^d)_k}{(q^d;q^d)_k^3(q^{2r};q^d)_k}q^{dk}\bigg\}^{m}
\notag\\[1mm]
&\qquad\quad+[n]^mq^{mr(r-n)/d}\frac{(2-q^n)(1-aq^n)(a-q^n)}{(1-a)^2}\frac{(q^{2r};q^d)_{(n-r)/d}^m}{(q^{d};q^d)_{(n-r)/d}^m}
\Omega_n(a), \label{eq:wei-iii}
\end{align}
where
\begin{align*}
\Omega_n(a)&=
\bigg\{\sum_{k=0}^{(n-r)/d}\frac{(aq^r,q^r/a,q^r,q^{d-r};q^d)_k}{(q^d;q^d)_k^3(q^{2r};q^d)_k}q^{dk}\bigg\}^{m}
\notag\\[1mm]
&
\quad-\bigg\{\sum_{k=0}^{(n-r)/d}\frac{(q^r;q^d)_k^3(q^{d-r};q^d)_k}{(aq^d,q^d/a,q^d,q^{2r};q^d)_k}q^{dk}\bigg\}^{m}.
\end{align*}
Thanks to L'H\^{o}pital rule, we derive
\begin{align*}
\lim_{a\to1}\frac{\Omega_n(a)}{(1-a)^2} &=-m
\bigg\{\sum_{k=0}^{(n-r)/d}\frac{(q^r;q^d)_k^3(q^{d-r};q^d)_k}{(q^d;q^d)_k^3(q^{2r};q^d)_k}q^{dk}\bigg\}^{m-1}
\notag\\[1mm]
&\quad\times\sum_{k=0}^{(n-r)/d}\frac{(q^r;q^d)_k^3(q^{d-r};q^d)_k}{(q^d;q^d)_k^3(q^{2r};q^d)_k}q^{dk}
\sum_{t=1}^{k}\bigg\{\frac{q^{dt-d+r}}{(1-q^{dt-d+r})^2}+\frac{q^{dt}}{(1-q^{dt})^2}\bigg\}.
\end{align*}
Letting $a\to1$ in \eqref{eq:wei-iii} and using the above limit, we
discover \eqref{eq:wei-c}.
\end{proof}

\textbf{Acknowledgments}\\

The work is supported by Hainan Provincial Natural Science
Foundation of China (No. 124RC511) and the National Natural Science
Foundation of China (No. 12071103).

\end{document}